\documentclass{article}[12pt]
\usepackage[english]{babel}
\usepackage{amsmath,amssymb,graphicx,hyperref,latexsym,theorem,array}

\setlength{\oddsidemargin}{27pt}
\setlength{\textwidth}{420pt}

\hbadness 10000
\vbadness 10000

\everymath={\displaystyle}

\newcommand{\assign}{:=}
\newcommand{\backassign}{=:}
\newcommand{\dueto}[1]{\textup{\textbf{(#1) }}}
\newcommand{\mathD}{\mathrm{D}}
\newcommand{\mathd}{\mathrm{d}}
\newcommand{\nin}{\not\in}
\newcommand{\tmem}[1]{{\em #1\/}}
\newcommand{\tmmathbf}[1]{\ensuremath{\boldsymbol{#1}}}
\newcommand{\tmname}[1]{\textsc{#1}}
\newcommand{\tmnote}[1]{\thanks{\textit{Note:} #1}}
\newcommand{\tmop}[1]{\ensuremath{\operatorname{#1}}}
\newcommand{\tmtextit}[1]{\text{{\itshape{#1}}}}
\newenvironment{proof}{\noindent\textbf{Proof\ }}{\hspace*{\fill}$\Box$\medskip}
\newenvironment{proof*}[1]{\noindent\textbf{#1\ }}{\hspace*{\fill}$\Box$\medskip}
\newtheorem{definition}{Definition}
\newtheorem{lemma}{Lemma}
{\theorembodyfont{\rmfamily}\newtheorem{remark}{Remark}}
\newtheorem{theorem}{Theorem}

\begin{document}

\title{The dyadic Riesz vector II}

\author{
  Komla Domelevo
  \and
  Stefanie Petermichl
  \tmnote{The second author is supported by the European Research Council
  project {CHRiSHarMa no. DLV-682402} and by the Alexander von Humboldt
  Foundation}
}

\maketitle

\begin{abstract}
  We derive a dyadic model operator for the Riesz vector. We show linear upper
  $L^p$ bounds for $1 < p < \infty$ between this model operator and the Riesz
  vector, when applied to functions with values in Banach spaces. By an upper
  bound we mean that the boundedness of the dyadic Riesz vector implies the
  boundedness of the Riesz vector. The same holds for single dyadic Riesz
  transforms and their continuous counterparts. The linear dependence is with
  constant one.
\end{abstract}

\section{Introduction}

Let $X$ be a UMD Banach space and $f \in L^p (\mathbb{R}^d, X)$ be a $L^p$
integrable $X$--valued function, that is
\begin{equation}
  \| f \|_{L^p (\mathbb{R}^d, X)} \assign \left( \int_{\mathbb{R}^d} | f |_X^p
  \mathd x \right)^{1 / p} < \infty . \label{eq:deterministic Lp norm}
\end{equation}
We are interested in comparing the $L^p$ operator norm the Riesz transforms
$\mathcal{R}_i$, $i \in [1, d]$ with their dyadic counterparts, the so--called
dyadic Riesz transforms $\mathcal{S}_i$, $i \in [1, d]$ defined below. In the
case $n = 1$, the authors have recently proved in [arxiv] that the Hilbert
transform $\mathcal{H}$ on the unit disc and the so--called dyadic Hilbert
transform $\mathcal{S}$ have operator norms in linear dependence. The natural
question arising at that point is whether or not the Riesz transforms and
dyadic Riesz transforms have comparable $L^p$ norms. We note further
$\overrightarrow{\mathcal{R}} f \assign (\mathcal{R}_1 f, \ldots,
\mathcal{R}_d f)$ and $\overrightarrow{\mathcal{S}} f \assign (\mathcal{S}_1
f, \ldots, \mathcal{S}_d f)$ respectively the Riesz and dyadic Riesz vectors.
Their $L^p$ norms are
\[ \| \overrightarrow{\mathcal{R}} f \|_p \assign \left( \int_{\mathbb{R}^d} |
   \overrightarrow{\mathcal{R}} f |_{\ell^2}^p \mathd x \right)^{1 / p}, \quad
   \| \overrightarrow{\mathcal{S}} f \|_p \assign \left( \int_{\mathbb{R}^d} |
   \overrightarrow{\mathcal{S}} f |_{\ell^2}^p \mathd x \right)^{1 / p} \]
with $| \vec{u} |_{\ell^2} \assign \left( \sum_{i = 1}^d | u_i |_X^2
\right)^{1 / 2}$. In the companion paper [arxiv], the authors prove linear
lower bounds
\[ \forall i \in [1, d], \quad \| \mathcal{S}_i \|_{p \rightarrow p} \leqslant
   c_0 \| \mathcal{R}_i \|_{p \rightarrow p}, \]
as well as
\[ \| \overrightarrow{\mathcal{S}} \|_{p \rightarrow p} \leqslant c_0 \|
   \overrightarrow{\mathcal{R}} \|_{p \rightarrow p}, \]
for a universal constant $c_0$. The main goal of the present paper is to prove
the dimensionless upper bounds

\begin{theorem}[Upper bound for a single Riesz transform]
  \label{T: upper bound single Riesz}We have for all $i \in [1, d]$,
  \[ \| \mathcal{R}_i \|_{p \rightarrow p} \leqslant \| \mathcal{S}_i \|_{p
     \rightarrow p} . \]
\end{theorem}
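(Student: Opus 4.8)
The plan is to reduce the $d$-dimensional comparison to the one-dimensional comparison between the Hilbert transform and the dyadic Hilbert transform from the companion work, using the classical method of rotations. Concretely, for $f\in L^p(\mathbb{R}^d,X)$ I would start from the pointwise identity
\[
\mathcal{R}_i f(x)=\frac{1}{\gamma_d}\int_{S^{d-1}}u_i\,H_u f(x)\,\mathd\sigma(u),\qquad \gamma_d\assign\int_{S^{d-1}}|u_1|\,\mathd\sigma(u),
\]
where $H_u$ is the directional Hilbert transform along $u$ (the one-dimensional Hilbert transform applied on every line parallel to $u$) and $\sigma$ is surface measure on $S^{d-1}$. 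For scalar $f$ this is just the method of rotations, checked on the Fourier side via $\int_{S^{d-1}}u_i\,\operatorname{sgn}\langle\xi,u\rangle\,\mathd\sigma(u)=\gamma_d\,\xi_i/|\xi|$; for $X$-valued $f$ I would obtain it by testing against $x^{\ast}\in X^{\ast}$, or directly from the principal-value kernel in polar coordinates. Since $\int_{S^{d-1}}|u_i|\,\mathd\sigma(u)=\gamma_d$ for every $i$ by the permutation symmetry of $\sigma$, the measure $\gamma_d^{-1}|u_i|\,\mathd\sigma(u)$ is a probability measure on $S^{d-1}$, and this is exactly what keeps the final constant equal to $1$.

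The second step is to bound each $H_u$ by the dyadic Riesz transform, uniformly in $u$. Fix $\rho\in SO(d)$ with $\rho e_1=u$ and let $U_\rho f\assign f\circ\rho^{-1}$; then $U_\rho$ is an isometry of $L^p(\mathbb{R}^d,X)$ and $H_u=U_\rho\,(\mathcal{H}\otimes\mathrm{Id})\,U_\rho^{-1}$, where $\mathcal{H}\otimes\mathrm{Id}$ is the one-dimensional Hilbert transform acting in the first variable. Writing $L^p(\mathbb{R}^d,X)=L^p(\mathbb{R},Y)$ with $Y\assign L^p(\mathbb{R}^{d-1},X)$ — again a UMD space — Fubini gives $\|\mathcal{H}\otimes\mathrm{Id}\|_{L^p(\mathbb{R}^d,X)}=\|\mathcal{H}\|_{L^p(\mathbb{R},Y)}$, whence $\|H_u\|_{p\to p}=\|\mathcal{H}\|_{L^p(\mathbb{R},Y)}$ for every $u$. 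Now I would invoke the one-dimensional upper bound from the companion paper, valid with constant one and uniformly over UMD spaces (and the same on the line and on the circle by transference): $\|\mathcal{H}\|_{L^p(\mathbb{R},Y)}\le\|\mathcal{S}\|_{L^p(\mathbb{R},Y)}$, with $\mathcal{S}$ the one-dimensional dyadic Hilbert transform. Finally, by the construction of the dyadic Riesz transform, $\mathcal{S}_i$ acts as $\mathcal{S}$ in the $i$-th variable and as the identity in the others, so $\|\mathcal{S}\|_{L^p(\mathbb{R},Y)}=\|\mathcal{S}\otimes\mathrm{Id}\|_{L^p(\mathbb{R}^d,X)}=\|\mathcal{S}_i\|_{p\to p}$.

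Assembling the pieces with Minkowski's integral inequality gives
\[
\|\mathcal{R}_i f\|_p\le\frac{1}{\gamma_d}\int_{S^{d-1}}|u_i|\,\|H_u f\|_p\,\mathd\sigma(u)\le\Big(\frac{1}{\gamma_d}\int_{S^{d-1}}|u_i|\,\mathd\sigma(u)\Big)\|\mathcal{S}_i\|_{p\to p}\|f\|_p=\|\mathcal{S}_i\|_{p\to p}\|f\|_p,
\]
which is the asserted bound.

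The main obstacle I expect is not the averaging scheme but making every comparison an \emph{equality}, so that no constant is lost: one must (i) justify the method-of-rotations identity for $X$-valued $f$ with the stated normalisation, including convergence of the principal values in the Banach-valued setting, and (ii) check that the chain $H_u\rightsquigarrow\mathcal{H}\otimes\mathrm{Id}\rightsquigarrow\mathcal{S}\otimes\mathrm{Id}\rightsquigarrow\mathcal{S}_i$ preserves operator norms exactly, which rests on the one-dimensional comparison being available with constant precisely one and uniformly across all UMD spaces — this is exactly what is imported from the companion work. A secondary point is to confirm that $\mathcal{S}_i$ is literally the tensor extension of the one-dimensional $\mathcal{S}$; should its definition instead be given through an average over dyadic systems, I would replace the method of rotations by the corresponding exact dyadic representation of $\mathcal{R}_i$ and run the same Minkowski argument, each averaged piece being isometrically conjugate to $\mathcal{S}_i$.
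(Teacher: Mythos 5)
The proposal breaks at the identification of $\mathcal{S}_i$ in its final step. In this paper the dyadic Riesz transform is \emph{not} the tensor extension $\mathcal{S}\otimes\mathrm{Id}$ of the one-dimensional dyadic Hilbert transform acting in the $i$-th coordinate of $\mathbb{R}^d$: it is an operator built on a \emph{single one-dimensional dyadic tree}, acting as the dyadic Hilbert transform only on the $i$-th slice $\mathcal{D}^{(i)}$ (the generations congruent to $i$ modulo $d$) and annihilating the Haar coefficients of all other generations. Consequently your chain $H_u\rightsquigarrow\mathcal{H}\otimes\mathrm{Id}\rightsquigarrow\mathcal{S}\otimes\mathrm{Id}\rightsquigarrow\mathcal{S}_i$ fails at its last link: there is no isometric conjugation between $\mathcal{S}\otimes\mathrm{Id}$ and $\mathcal{S}_i$, and since $\mathcal{S}_i$ is morally $\mathcal{S}$ precomposed with a projection onto one slice of Haar coefficients, the only inequality one expects between their norms ($\|\mathcal{S}_i\|\leqslant\|\mathcal{S}\|$) points the wrong way for your argument. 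Your closing hedge (that $\mathcal{S}_i$ might instead be an average over dyadic systems) does not cover the actual definition either. What the method of rotations plus the companion one-dimensional result would legitimately give is $\|\mathcal{R}_i\|_{p\to p}\leqslant\|\mathcal{H}\|_{L^p(\mathbb{R},Y)\to L^p(\mathbb{R},Y)}$ with $Y=L^p(\mathbb{R}^{d-1},X)$, a different (and weaker-looking) statement than the theorem.

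The paper's route is entirely probabilistic and makes no use of rotations. Following Gundy--Varopoulos, $\mathcal{R}_i$ is realized through the martingale transform $\int A_i\nabla f(W^{\tau})\cdot\mathd W^{\tau}$ of Brownian motion in the upper half space; one then encodes a $(d+1)$-dimensional discrete random walk $B^{(i)}$ into the one-dimensional dyadic tree, reading the tree $d$ generations at a time so that left children produce the $d$ horizontal steps and right children of the $i$-th slice produce the vertical step. The resulting algebraic identity $\mathcal{S}_i\,\mathd B_k^{(i)}=A_i^{\ast}\mathd B_k^{(i)}$ yields $\|M_k^{(i),i}\|_p\leqslant\|\mathcal{S}_i\|_{p\to p}\|M_k^{(i),f}\|_p$ exactly, and the theorem follows by passing to the limit in the weak formulation ($\varepsilon\to 0$, $T\to\infty$, then $y\to\infty$). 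To salvage your strategy you would have to prove that the sliced operator $\mathcal{S}_i$ dominates the full one-dimensional $\mathcal{S}$ in norm, which is not automatic and is precisely what the random-walk construction circumvents.
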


\begin{theorem}[Upper bound for the Riesz vector]
  \label{T: upper bound Riesz vector}We have
  \[ \| \overrightarrow{\mathcal{R}} \|_{p \rightarrow p} \leqslant \|
     \overrightarrow{\mathcal{S}} \|_{p \rightarrow p} . \]
\end{theorem}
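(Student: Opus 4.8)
The plan is to exhibit $\overrightarrow{\mathcal{R}}$ as an \emph{exact} average of rotated, dilated and translated copies of the dyadic Riesz vector $\overrightarrow{\mathcal{S}}$, and then to exploit that the $L^p$--norm of an average is bounded by the average of the $L^p$--norms. Let $(\Omega, \mathbb{P})$ be a probability space parametrising a random dyadic lattice $\mathcal{D}^\omega$, obtained from a fixed lattice by a random rotation $\rho = \rho(\omega) \in SO(d)$ (Haar measure), a random dilation $r = r(\omega)$ (the normalised invariant measure $\mathd r / r$ on $[1, 2)$), and the usual random shift of the lattice; write $\overrightarrow{\mathcal{S}}^\omega = (\mathcal{S}_1^\omega, \ldots, \mathcal{S}_d^\omega)$ for the dyadic Riesz vector built on $\mathcal{D}^\omega$, its $d$ scalar components transforming as a vector under $\rho$. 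The core of the argument is the identity
\[
  \overrightarrow{\mathcal{R}} f \;=\; \mathbb{E}_\omega \, \overrightarrow{\mathcal{S}}^\omega f ,
\]
understood weakly, for $f$ in a dense subspace of $L^p (\mathbb{R}^d, X)$. Granting it, the theorem reduces to two short observations.

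First, each $\overrightarrow{\mathcal{S}}^\omega$ is isometrically equivalent to $\overrightarrow{\mathcal{S}}$ on $L^p (\mathbb{R}^d, \ell^2 (X))$: passing from $\mathcal{D}$ to $\mathcal{D}^\omega$ conjugates $\overrightarrow{\mathcal{S}}$ by the substitution $f (x) \mapsto f (\rho x + \beta)$ together with the orthogonal action of $\rho$ on the $\ell^2$--index, and a rotation preserves $| \cdot |_{\ell^2}$ pointwise while translations preserve $\| \cdot \|_p$ and the dilation Jacobian cancels between operator and inverse; hence $\| \overrightarrow{\mathcal{S}}^\omega \|_{p \rightarrow p} = \| \overrightarrow{\mathcal{S}} \|_{p \rightarrow p}$ for every $\omega$. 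Second, since $L^p (\mathbb{R}^d, \ell^2 (X))$ is a Banach space, Minkowski's integral inequality gives
\[
  \| \overrightarrow{\mathcal{R}} f \|_p \;=\; \| \mathbb{E}_\omega \overrightarrow{\mathcal{S}}^\omega f \|_p \;\leqslant\; \mathbb{E}_\omega \| \overrightarrow{\mathcal{S}}^\omega f \|_p \;\leqslant\; \mathbb{E}_\omega \| \overrightarrow{\mathcal{S}}^\omega \|_{p \rightarrow p} \, \| f \|_p \;=\; \| \overrightarrow{\mathcal{S}} \|_{p \rightarrow p} \, \| f \|_p .
\]
Theorem~\ref{T: upper bound single Riesz} follows from the identical scheme with the rotation removed from the randomisation.

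The real content --- and the expected main obstacle --- is the averaging identity together with the fact that its constant is exactly $1$. I would prove it at the level of kernels: writing $K_i^\omega$ for the Calder\'on--Zygmund kernel of $\mathcal{S}_i^\omega$, the goal is
\[
  \mathbb{E}_\omega \, K_i^\omega (x, y) \;=\; c_d \, \frac{x_i - y_i}{|x - y|^{d + 1}}, \qquad x \neq y ,
\]
with $c_d$ the exact constant of the $i$--th Riesz transform. One integrates the explicit shift kernel, which lives at a fixed scale ratio near the diagonal, first over the random shift, then over the dilation $r$ against $\mathd r / r$, reassembling a kernel homogeneous of degree $-d$; the final average over $\rho \in SO (d)$ renders the result $SO (d)$--equivariant, and an odd vector kernel that is homogeneous of degree $-d$ and $SO (d)$--equivariant is necessarily a scalar multiple of the Riesz kernel. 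That the scalar equals $1$ is precisely where the normalisation fixed in the definition of $\overrightarrow{\mathcal{S}}$ below is used, and can be checked by evaluating the average against a single test pair. Two further points need care: the interchange of $\mathbb{E}_\omega$ with the spatial integrations in the weak formulation, justified by absolute convergence off the diagonal together with the standard Calder\'on--Zygmund estimates near it; and the reduction of the $X$--valued identity to the scalar one, which is automatic because the Haar shifts act only in the spatial variable and never mix the $X$--coordinates, so the scalar kernel identity simply tensors with $\mathrm{Id}_X$.
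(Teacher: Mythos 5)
Your argument breaks down at the very first step, because it is built on a reading of $\overrightarrow{\mathcal{S}}$ that does not match the object defined in this paper. Here the dyadic Riesz transforms $\mathcal{S}_i$ are \emph{not} Haar shifts on dyadic cubes of $\mathbb{R}^d$: they are operators on the one--dimensional dyadic tree of $I_0 = [0,1)$, acting as the dyadic Hilbert transform on the slice $\mathcal{D}^{(i)}$ of generations congruent to $i$ modulo $d$ and annihilating the other slices. In particular $\mathcal{S}_i$ acts on $L^p(I_0, X)$, not on $L^p(\mathbb{R}^d, X)$, so there is no Calder\'on--Zygmund kernel $K_i^{\omega}(x,y)$ with $x, y \in \mathbb{R}^d$, no action of $SO(d)$ on the family $\{\mathcal{S}_i^{\omega}\}$, and the proposed identity $\overrightarrow{\mathcal{R}} f = \mathbb{E}_{\omega} \overrightarrow{\mathcal{S}}^{\omega} f$ is not even well posed: the two sides live on different measure spaces. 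The comparison the paper actually makes is probabilistic: the dyadic tree drives a $(d+1)$--dimensional discrete random walk $B^{(i)}$ in the upper half space, the increments are arranged so that $\mathcal{S}_i \,\mathd B_k^{(i)} = A_i^{\ast} \mathd B_k^{(i)}$, hence $\mathcal{S}_i$ acts on the discrete stochastic integral $M^{(i),f}$ exactly as the Gundy--Varopoulos martingale transform acts on the It\^o integral (Lemma \ref{L: Lp estimate for Mii}); one then passes to the limit in a weak formulation adapted to the vector case, in which the $d$ summands are allowed to use $d$ \emph{different} realizations of the Brownian motion, matching the $d$ different walks $B^{(i)}$. That decoupling of the realizations is the specific point the vector theorem needs beyond Theorem \ref{T: upper bound single Riesz}, and it has no counterpart in your scheme.

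Even if one replaced the paper's $\mathcal{S}_i$ by a genuine Haar shift on dyadic cubes of $\mathbb{R}^d$ so that your averaging identity became meaningful, the argument would still not deliver the stated inequality. The normalisation of $\overrightarrow{\mathcal{S}}$ is fixed by the paper's definition; you cannot choose it afterwards to force the averaging constant to equal $1$. In the one--dimensional prototype (Petermichl's shift versus the Hilbert transform) the average of the shift over random dyadic grids is a small nonzero multiple $c$ of $H$, with $|c|$ far from $1$, so the averaging route yields $\| \mathcal{R}_i \|_{p \rightarrow p} \leqslant |c|^{-1} \| \mathcal{S}_i \|_{p \rightarrow p}$ rather than the constant--one bound claimed here. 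Moreover the symmetry argument you invoke (odd, homogeneous of degree $-d$, $SO(d)$--equivariant, hence a multiple of the Riesz kernel) only identifies the average up to a scalar that could vanish; showing it is nonzero is itself the delicate point in all known averaging representations, and nothing in your sketch addresses it. So the proposal contains two genuine gaps: the averaging identity is unavailable for the operators of this paper, and even in a setting where an averaging identity holds it cannot produce the constant $1$.
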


As compared to the onedimensional case studied in {\cite{DomPet2023a}}, \
several new difficulties arise, that we detail later. To summarize, (i) the
Riesz transforms are directly defined as stochastic integrals but as
conditional expectations upon arrival, (ii) convergence of weak formulations
are needed, (iii) the domain and its boundary are unbounded, (iv) the Riesz
vector estimate above requires a specific weak formulation, and finally (v)
the discrete random walks associated to the different dyadic Riesz transforms
are more intricate.

We introduce now the dyadic setting and define the dyadic Riesz transforms.

\paragraph{Dyadic system}Let $I_0 = [0, 1)$ and $\mathcal{D}$ the dyadic
intervals
\[ \mathcal{D} \assign \left\{ [m 2^{- i}, (m + 1) 2^{- i}) ; \quad 0
   \leqslant i, 0 \leqslant m \leqslant 2^i - 1 \right\} = \bigcup_{i
   \geqslant 0} \mathcal{D}_i \]
where $\mathcal{D}_i \assign \left\{ [m 2^{- i}, (m + 1) 2^{- i}) ; \quad 0
\leqslant m \leqslant 2^i - 1 \right\}$ is the set of dyadic intervals of the
$i$--th generation having therefore length $2^{- i}$. We note further
$\hat{I}$ the parent of $I$, and $I_-$ (resp. $I_+$) the left (resp. right)
children of $I$, $\mathcal{D}^- \subset \mathcal{D}$ the subset of left
children and $\mathcal{D}^+ \subset \mathcal{D}$ the subset of right children.
We have therefore $\mathcal{D} = \{ I_0 \} \cup \mathcal{D}^- \cup
\mathcal{D}^+$. We also note $\mathcal{D}_i^- \assign \mathcal{D}_i \cap
\mathcal{D}^-$ the left children of $\mathcal{D}_i$, and $\mathcal{D}_i^+
\assign \mathcal{D}_i \cap \mathcal{D}^+$ the right children of
$\mathcal{D}_i$.

Let the $X$--valued function $f$ locally integrable on $I_0$. Its Haar
decomposition writes.
\[ f (x) = \langle f \rangle_{I_0} + \sum_{I \in \mathcal{D}} (f, h_I) h_I, \]
where $h_I \assign (\tmmathbf{1}_{I_+} -\tmmathbf{1}_{I_-}) / \sqrt{| I |}$ is
the $L^2$--normalized Haar function associated to the interval $I$, and
$\langle f \rangle_I \assign (f, \tmmathbf{1}_I) / | I |$ is the average of
$f$ on $I$. The dyadic Hilbert transform $\mathcal{S}$ studied in
{\cite{DomPet2022a,DomPet2023a}} is the operator sending
\[ \langle f \rangle_{I_0} \mapsto 0, \quad h_{I_0} \mapsto 0, \quad
   h_{I_{\pm}} \mapsto \pm h_{\mp} . \]
Writing the Haar decomposition of $f$ in the equivalent forms

\[ \begin{array}{lll}
     f & = & \langle f \rangle_{I_0} + \sum_{I \in \mathcal{D}} (f, h_I) h_I\\
     & = & \langle f \rangle_{I_0} + (f, h_{I_0}) h_{I_0} + \sum_{k =
     1}^{\infty} \sum_{I \in {\mathcal{D}_k} } (f, h_I) h_I\\
     & = & \langle f \rangle_{I_0} + (f, h_{I_0}) h_{I_0} + \sum_{k =
     0}^{\infty} \sum_{I \in {\mathcal{D}_k} } (f, h_{I_-}) h_{I_-} + (f,
     h_{I_+}) h_{I_+},
   \end{array} \]
yields the Haar decomposition of the dyadic Hilbert transform $\mathcal{S} f$
applied to $f$
\[ \begin{array}{lll}
     \mathcal{S} f & = & \sum_{k = 1}^{\infty} \sum_{I \in {\mathcal{D}_k} }
     (f, h_I) \mathcal{S} h_I\\
     & = & \sum_{k = 0}^{\infty} \sum_{I \in {\mathcal{D}_k} } - (f, h_{I_-})
     h_{I_+} + (f, h_{I_+}) h_{I_-} .
   \end{array} \]

\paragraph{Dyadic Riesz transforms}We now generalize the construction of the
dyadic Hilbert transfrom above to higher dimensional settings. Let
$\mathcal{D}_i = \{ I \in \mathcal{D} ; | I | = 2^{- i} | I_0 | \}$ the
$i$--th generation of dyadic intervalls, and $\mathcal{D}^{(i)} = \cup_{k =
0}^{\infty} \mathcal{D}_{k d + i}$ the $i$--th slice of the dyadic system,
that is all dyadic intervals whose generation number equals $i$ modulo $d$.
The operator $\mathcal{S}_i$, $1 \leqslant i \leqslant d$, is defined as
\[ \mathcal{S}_i : \quad h_I \mapsto \left\{ \begin{array}{ll}
     \mathcal{S} h_I & , I \in \mathcal{D}^{(i)}\\
     0 & , I \nin \mathcal{D}^{(i)} .
   \end{array} \right. \]
In other words it acts on the $i$--th slice in the same way as the dyadic
Hilbert transform does. However, it sends to zero all elements of the other $d
- 1$ slices. Correspondingly, the Haar decomposition of $f$, written as blocks
of $d$ generations, reads
\[ \begin{array}{lll}
     f & = & \langle f \rangle_{I_0} + (f, h_{I_0}) h_{I_0} + \sum_{k =
     0}^{\infty} \sum_{j = 1}^d \sum_{I \in {\mathcal{D}_{k d + j}} } (f, h_I)
     h_I\\
     & = & \langle f \rangle_{I_0} + (f, h_{I_0}) h_{I_0} + \sum_{k =
     0}^{\infty} \sum_{j = 1}^d \sum_{I \in {\mathcal{D}_{k d + j - 1}} } (f,
     h_{I_-}) h_{I_-} + (f, h_{I_+}) h_{I_+},
   \end{array} \]
and the $i$--th dyadic Riesz transform $\mathcal{S}_i f$ defined above writes
as

\[ \mathcal{S}_i f = \sum_{I \in \mathcal{D}^{(i)}} (f, h_I)  \mathcal{S} h_I
   = \sum_{k = 0}^{\infty} \sum_{I \in \mathcal{D}_{k d + i - 1}} - (f,
   h_{I_-}) h_{I_+} + (f, h_{I_+}) h_{I_-} . \]

Inspired by the onedimensional case, we wish to approximate the Riesz
transforms in $\mathbb{R}^d$ with the help of $(d + 1)$--dimensional discrete
random walks built upon the dyadic filtration. However, a dyadic tree is
naturally suited for coding a onedimensional discrete random walk consisting
of random $\pm 1$ steps, i.e. left/right steps. For our task, we need to use
the steps provided by the dyadic tree in order to produce steps along $(d +
1)$ different directions. In the case $d = 1$, this was done by assigning \
left children of the dyadic tree to horizontal steps of the twodimensional
discrete random walk, and right children to vertical steps. For the general
case, we need different assignments and interpretations of the tosses offered
by the dyadic tree.

Let us first review the random objects involved in the continuous setting.

\paragraph{Stochastic representations of Riesz transforms}The stochastic
representation of Riesz transforms in $\mathbb{R}^d$ follows the work of
{\tmname{Gundy--Varopoulos}} {\cite{GunVar1979a}}, where the Riesz transforms
are conditional expectations of suitable martingales transforms. See also
{\tmname{Ba{\~n}uelos}} and {\tmname{Wang}} {\cite{BanWan1996}}. Let $\Omega
\assign \mathbb{R}^{d + 1}_+ = \{ (x_0, x_1, \ldots, x_d) ; x_0 > 0, x = (x_1,
\ldots, x_d) \in \mathbb{R}^d \}$ the upper--half space. We call $x_0 \in
\mathbb{R}_+$ the ``vertical'' component of $x \in \Omega$. Given $y > 0$, let
$W^y = (W^{y, 0}, W^{y, 1}, \ldots, W^{y, d})$ the $(d + 1)$--dimensional
Brownian motion started at time $0$ at the point $(y, 0, \ldots, 0) \in
\Omega$ above the origin. Let $\tau$ the stopping time, almost surely finite,
defined as
\[ \tau \assign \inf \{ t > 0 ; W_t \nin \Omega \} . \]
This is also the hitting time of $W^y$ hitting the boundary $\partial \Omega
=\mathbb{R}^d$ of the upper--half space. We further denote by $(W_t^{y,
\tau})_{t \geqslant 0} \assign (W^y_{t \wedge \tau})_{t \geqslant 0}$ the
stopped process. By construction $W^{\tau, y}_t \in \Omega$ for all $t > 0$
and $W^{\tau, y}_t \in \partial \Omega$ for all $t \geqslant \tau$. In
particular $W^{\tau, y}_{\infty} \in \partial \Omega$.

Let now $f \in L^p (\partial \Omega, X)$ a smooth function (see the precise
definition of smooth below at the end of the section). We also denote by $f :
\Omega \rightarrow X$ its harmonic extension in the upper--half space. Given
$y > 0$ and $f$ as above, Ito formula ensures that the stochastic process
$\mathcal{M}_t^{y, f} \assign f (W^{y, \tau}_t)$ is a martingale satisfying
\[ \mathcal{M}_t^{y, f} \assign f (W^{y, \tau}_t) = f (y) + \int_0^t \nabla f
   (W^{y, \tau}_t) \cdot \mathd W^{y, \tau}_t, \]
and its $i$--th martingale transform $\mathcal{M}_t^i$ is defined as
\begin{equation}
  \mathcal{M}_t^{y, i} \assign \int_0^t (A_i \nabla f) (W^{y, \tau}_t) \cdot
  \mathd W^{y, \tau}_t = \int_0^t \nabla f (W^{y, \tau}_t) \cdot (A_i^{\ast}
  \mathd W^{y, \tau}_t), \quad i \in [1, d], \label{eq:Mif}
\end{equation}
where $A_i \in \mathbb{R}^{(d + 1) \times (d + 1)}$ is the matrix defining the
$i$--th martingale transform. Namely if $(e_0, e_1, \ldots, e_d)$ denotes the
standard basis of $\mathbb{R}^{d + 1}$, we have set
\[ A_i e_0 = - e_i, \quad A_i e_i = e_0, \quad A_i e_j = 0, \quad j \in [1,
   d], j \neq i, \]
and we observe that its transpose $A_i^{\ast}$ obeys
\[ A^{\ast}_i e_0 = e_i, \quad A_i^{\ast} e_i = - e_0, \quad A^{\ast}_i e_j =
   0, \quad j \in [1, d], j \neq i. \]
To summarize,
\begin{equation}
  A_i \nabla f = (- \partial_i f, 0, \ldots, \partial_0 f, 0, \ldots, 0),
  \label{eq:Cauchy Riemann on grad f}
\end{equation}
and for the transposed counterpart, where we let the transform act on the
random walk,
\begin{equation}
  A_i^{\ast} \mathd W_t = (\mathd W^i_t, 0, \ldots, 0, - \mathd W^0_t, 0,
  \ldots, 0) . \label{eq:Cauchy Riemann on dW}
\end{equation}
Following {\cite{GunVar1979a}}, see also {\cite{BanWan1996}}, the weak
formulation for the $i$--th Riesz transforms writes
\begin{equation}
  (\mathcal{R}_i f, g) =\mathbb{E} \int_0^{\infty} (A_i \nabla f) (W^{y,
  \tau}_t) \cdot \nabla g (W^{y, \tau}_t) \mathd t, \label{eq:weak formulation
  Ri}
\end{equation}
for any $X^{\ast}$--valued harmonic test function $g : \Omega \rightarrow
X^{\ast}$ with smooth boundary values $g \in L^q (\partial \Omega, X^{\ast})$.
Equivalently, the $i$--th Riesz transform is recovered pointwise thanks to the
conditional expectation (see {\cite{GunVar1979a}}),
\begin{equation}
  \forall x \in \partial \Omega, \quad \mathcal{R}_i f (x) = \lim_{y
  \rightarrow + \infty} \mathbb{E} (\mathcal{M}_{\infty}^{y, i} | W^{y,
  \tau}_{\infty} = (0, x)) . \label{eq:conditonal expectation Ri}
\end{equation}

\paragraph{Summary of notations and functional setting}Recall that we denote
$\Omega =\mathbb{R}^{d + 1}_+$ the upper half space and $\partial \Omega
=\mathbb{R}^d$ its boundary. We assume without loss of generality that $f \in
L^p (\partial \Omega, X)$ and its harmonic extension (also noted $f$) $f \in
L^p (\Omega, X)$ are smooth Frechet differentiable functions, that is $f \in
\mathcal{C}^k$ for all $k \geqslant 0$. We note $| f (x) | \assign | f (x)
|_X$ the Banach space norm of $f (x)$. The $L^p$--norm of $f$ is defined as in
\eqref{eq:deterministic Lp norm}.

Further if $Y$ is an $L^p$ integrable random variable, the stochastic $L^p$
norm is defined as
\[ \| Y \|_p \assign \mathbb{E} (| Y |^p)^{1 / p} . \]
If $Y$ is real valued, then $| Y |$ denotes the absolute value of $Y$ whereas
if $Y$ is $X$--valued, then $| Y | \assign | Y |_X$ denotes the Banach space
norm of $Y$. For a real random variable, we note its variance $\mathbb{V} (Y)
\assign \mathbb{E} (Y -\mathbb{E} (Y))^2$.

If $f \assign f (x_1, \ldots, x_m)$ is a $X$--valued function of $m$ variables
defined on the open set $U \subset \mathbb{R}^m$, we note $\mathD f \assign
(\partial_1 f, \ldots, \partial_m f)$ its derivatives in the Frechet sense,
where $\mathD f : U \times \mathbb{R}^m \rightarrow X$ is continuous. Further,
given a $m$--multiindex $\alpha \assign (\alpha_1, \alpha_2, \ldots,
\alpha_m)$ with $| \alpha | = k$ we note as usual $\mathD^{\alpha} f \assign
\partial_1^{\alpha_1} \ldots \partial_m^{\alpha_m} f$ its partial derivatives
of order $k$ in the Frechet sense, where $\mathD^{\alpha} f : U \times
(\mathbb{R}^m)^k \rightarrow X$ is continuous. Finally,using again
multiindices, monomials of the from $x^{\alpha}$, where $x \assign (x_1,
\ldots, x_m)$, are a shorthand for $x^{\alpha} \assign x_1^{\alpha_1} \ldots
x_m^{\alpha_m}$.

The dual space of $X$ is noted $X^{\ast}$, and the dual space of $L^p (U, X)$
is noted $L^q (U, X^{\ast})$ with $1 / p + 1 / q = 1$.

For the convergence results, our main parameters are $\varepsilon > 0$, a
small number, and $T > 0$, a large number. We note $c (\varepsilon)$ a generic
function tending to $0$ uniformly in $T$ when $\varepsilon$ goes to zero, $c_T
(\varepsilon)$ a generic function tending to $0$ when $\varepsilon$ goes to
zero for any fixed $T$, $c (T)$ a generic function tending to $0$ when $T$
goes to infinity, uniformely in $\varepsilon$. It is implicit that those
functions all depend on the fixed function $f$ and its derivatives. Further
dependences will be mentionned when needed.

\section{Discrete random walks and martingales}

Following the strategy used for the dyadic Hilbert transform, we wish to
approximate continuous martingales such as $\mathcal{M}_t^f$ that are driven
by $W^{y, \tau}_t$ with discrete martingales $M_k^f$ driven by a $(d +
1)$--dimensional discrete random walk $(B_k)_{k \in \mathbb{N}}$ living on a
{\tmem{dyadic}} filtered probability space. At difference with the Hilbert
case, the construction of a single discrete random walk does not allow us to
estimate the different Riesz transforms. We build instead a set of discrete
random walks $(B^{(i)}_k)_{k \in \mathbb{N}}$ for $i \in [1, d]$, each of
which is adapted to the study of one single Riesz transform. For a given $i
\in [1, d]$, we are aiming at mimicking the action of the $i$--th Riesz
martingale transform $\mathd W_t \rightarrow A_i^{\ast} \mathd W_t$ as seen in
the expression \eqref{eq:Mif} of $\mathcal{M}^i_t$ and in \eqref{eq:Cauchy
Riemann on dW}. In other words, we want
\[ \mathcal{S}_i \mathd B_k^{(i)} = A_i^{\ast} \mathd B_k^{(i)}, \quad
   \tmop{all} k \geqslant 1, \]
where $\mathd B_k^{(i)} \assign B_k^{(i)} - B_{k - 1}^{(i)}$ is the $k$--th
increment of the discrete random walk $B^{(i)}$ and $\mathcal{S}_i$ the
$i$--th dyadic Riesz transform defined earlier. We set the starting point of
$B^{(i)}$ to coincide with that of $W^{y, \tau}$, namely
\[ B_0^{(i)} = W_0^{y, \tau} = (y, 0, \ldots, 0), \]
and we will omit the superscript $y$ for the discrete random walks. Let us
split $B^{(i)}$ into its vertical component $B^{(i), 0} \in \mathbb{R}^+$, a
one--dimensional discrete random walk started at $y > 0$, and its horizontal
component $B^H \in \mathbb{R}^d$,
\[ B_k^{(i)} = (B_k^{(i), 0}, B^H_k), \quad \tmop{all} k \geqslant 0, \]
where $B^H_k = (B^{H, 1}_k, B^{H, 2}_k, \ldots, B^{H, d}_k)$ is a
$d$--dimensional discrete random walk started at $0 \in \mathbb{R}^d$.

\

We associate nodes of the dyadic tree to certain increments of $B^{(i)}$. We
start with the horizontal part. Here, for any slice $\mathcal{D}^{(j)}$, $j
\in [1, d]$, we associate each left interval of that slice to an increment of
$B^H_k$ in the $j$--th direction. Schematically, for $d = 2$, those are the
nodes and directions involved, marked with $x_1$ for producing an increment of
$B^H$ parallel to $e_1$ and marked with $x_2$ for producing an increment of
$B^H$ parallel to $e_2$:

\begin{figure}[h] \centering
  \raisebox{-0.966367476519199\height}{\includegraphics[width=8.18711875245966cm,height=4.66408402203857cm]{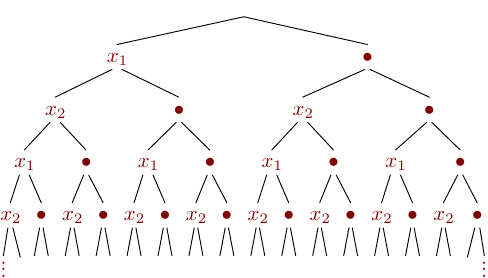}}
  \caption{Nodes contributing to $B^H$}
\end{figure}

Now the definition of the vertical component $B^{(i), 0}$ depends on the
direction $i$ of the chosen dyadic Riesz transform $\mathcal{S}_i$ under
study. Given $i \in [1, d]$, we will collect increments of $B^{(i), 0}$ only
on intervals that are on the one hand right children \tmtextit{and} on the
other hand that belong to the $i$--th slice $\mathcal{D}^{(i)}$.
Schematically, for $d = 2$, those are the nodes involved respectively for
$B^{(1), 0}$ and $B^{(2), 0}$:

\begin{figure}[h] \centering
  \raisebox{-0.955922865013774\height}{\includegraphics[width=6.30836776859504cm,height=3.71428571428571cm]{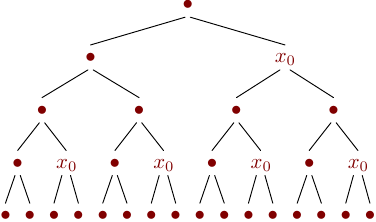}}
  \caption{Nodes involved for $B^{(1), 0}$}
\end{figure}

\begin{figure}[h] \centering
  \raisebox{-0.956546859342001\height}{\includegraphics[width=7.62399153876427cm,height=3.76762347500984cm]{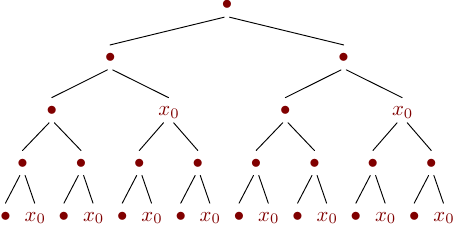}}
  \caption{Nodes involved for $B^{(2), 0}$}
\end{figure}

The construction of all these objects obeys a period equal to $d$ generations.
Let us note $\mathcal{L}_k \assign \cup_{j = 1}^d \mathcal{D}_{(k - 1) d +
j}$, $k \geqslant 1$, the $k$--th layer of the dyadic tree, made of $d$
successive generations of intervals. Notice that the first layer starts at the
first generation $\mathcal{D}_1$ of dyadic intervals, i.e. $\mathcal{L}_1 =
\mathcal{D}_1 \cup \ldots \cup \mathcal{D}_d$. The root $\mathcal{D}_0 = \{
I_0 \}$ does not belong to any layer. This is because we only consider
intervals with siblings. Introduce the tosses $\varepsilon_I (x) = \pm 1$
defined as the $L^{\infty}$ normalized Haar functions
\[ \varepsilon_I (x) \assign \sqrt{| I |} h_I (x) =\tmmathbf{1}_{I_+} (x)
   -\tmmathbf{1}_{I_-} (x) = \left\{ \begin{array}{ll}
     1, & x \in I_-\\
     - 1, & x \in I_+\\
     0, & \tmop{otherwise}
   \end{array} \right. \]
We now define tosses parametrized with generation and left/right position. For
the generation $i = 0$ we have simply
\[ \varepsilon_0 (x) = \varepsilon_{I_0} (x) . \]
Similarly the $i$--th toss can be defined as
\[ \varepsilon_i (x) = \sum_{I \in \mathcal{D}^i} \varepsilon_I (x) . \]
Notice that only one term is non zero in the sum above, namely $\varepsilon_i
(x) = \varepsilon_{I^x_i} (x)$ where $I^x_i$ denotes the unique dyadic
interval in $\mathcal{D}_i$ that contains $x$. For our purpose, we need to
split those tosses into two groups,
\[ \varepsilon_i^- (x) = \left\{ \begin{array}{ll}
     \varepsilon_i (x), & x \in \mathcal{D}_i^-\\
     0, & x \in \mathcal{D}_i^+
   \end{array} \right., \quad \varepsilon_i^+ (x) = \left\{ \begin{array}{ll}
     0, & x \in \mathcal{D}_i^-\\
     \varepsilon_i (x), & x \in \mathcal{D}_i^+
   \end{array} \right., \]
or equivalently
\[ \varepsilon^{\pm}_i (x) =\tmmathbf{1} (\varepsilon_{i - 1} (x) = \pm 1)
   \varepsilon_i (x) = \sum_{I \in \mathcal{D}_i^{\pm}} \varepsilon_I (x) . \]
Here is a summary of the different tosses carried by the dyadic tree up to
generation $\mathcal{D}_4$.

\

\begin{figure}[h] \centering
  \raisebox{-0.958652606730754\height}{\includegraphics[width=9.6448494687131cm,height=3.95950167256985cm]{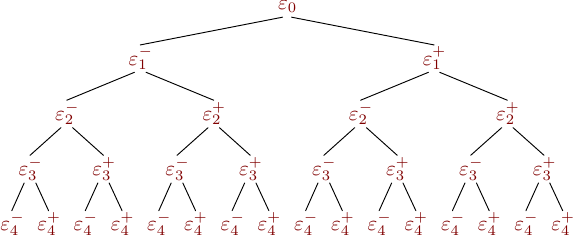}}
  \caption{Tosses on the dyadic tree.}
\end{figure}

Let $\delta$ a small time step. For each $k \geqslant 1$, we construct the
step $\mathd B_k^{(i)}$ of the discrete random walk $B^{(i)}_k$ using
information from the layer $\mathcal{L}_k$:
\[ \mathd B_k^H (x) = \sqrt{2 \delta}  (\varepsilon_{(k - 1) d + 1}^- (x),
   \varepsilon_{(k - 1) d + 2}^- (x), \ldots, \varepsilon_{(k - 1) d + d}^-
   (x)), \quad \mathd B_k^{(i), 0} (x) = \sqrt{2 \delta} \varepsilon_{(k - 1)
   d + i}^+ (x) . \]
It follows
\begin{eqnarray}
  \mathcal{S}_i \mathd B^{(i)}_k & = & \mathcal{S}_i (\mathd B^{(i), 0}_k,
  \mathd B_k^H) \nonumber\\
  & = & \sqrt{2 \delta}  \mathcal{S}_i (\varepsilon_{(k - 1) d + i}^+,
  \varepsilon_{(k - 1) d + 1}^-, \varepsilon_{(k - 1) d + 2}^-, \ldots,
  \varepsilon_{(k - 1) d + d}^-) \nonumber\\
  & = & \sqrt{2 \delta}  (\varepsilon^-_{(k - 1) d + i}, 0, \ldots, 0, -
  \varepsilon_{(k - 1) d + i}^+, 0, \ldots, 0) \nonumber\\
  & = & A_i^{\ast} \mathd B^{(i)}_k .  \label{eq:SidB}
\end{eqnarray}
This achieves our first goal of expressing the action of $\mathcal{S}_i$ as a
generalized Cauchy--Riemann relation on the dyadic system.

\

\paragraph{Coarse graining and stopping times}Let $T > 0$ a fixed time, and
$N \in \mathbb{N}$ large. Define the time step $\delta$ through $T = N^5
\delta$. In order to denote the corresponding discrete times, we will use the
indices $k, l \in [0, N^5]$, typically $t_k \assign k \delta$, $t_l \assign l
\delta$. We introduce a coarser time-step $\theta$ defined as $\theta \assign
N \delta$. Notice that $T = N^4 \theta$, so that $\theta$ tends to zero as $N$
goes to infinity for fixed $T$. The discrete times corresponding to this
time--step will use indices $n, m \in [0, N^4]$, typically $t_n \assign n
\theta$, $t_m \assign m \theta$. For a given $i \in [1, d]$, we define the
discrete random walk $X^{(i)}$ by sampling $B^{(i)}$ at times that are
multiples of $\theta = N \delta$, therefore with indices that are multiples of
$N$,
\[ \forall n \geqslant 0, \quad X^{(i)}_n \assign B^{(i)}_{n N} . \]
Notice further that, for all $n \geqslant 1$,
\[ \mathd X^{(i)}_n \assign X^{(i)}_n - X^{(i)}_{n - 1} = B^{(i)}_{n N} -
   B^{(i)}_{(n - 1) N} = \sum_{l = 1}^N \mathd B^{(i)}_{(n - 1) N + l} . \]
In order to stop $X^{(i)}$ before it leaves the upper--half space, we set
$\varepsilon = 1 / N$ and introduce
\[ \Omega_{\varepsilon} \assign \left\{ x = (x_0, x_1, \ldots, x_d) \in \Omega
   ; \quad x_0 > \varepsilon \right\}, \]
as well as the stopping time, for each $i \in [1, d]$,
\[ \tau_{\varepsilon}^{(i)} \assign \inf \left\{ t_n ; \quad X^{(i)}_n \nin
   \Omega_{\varepsilon} \right\} . \]
We will denote by $n_{\varepsilon}^{(i)}$ the corresponding random index such
that $\tau_{\varepsilon}^{(i)} \assign n_{\varepsilon}^{(i)} \theta$. One
observes that for small enough $\varepsilon > 0$ (i.e. large enough $N$), we
have for all $n$, $| \mathd X^{(i)}_n | \leqslant N \delta \leqslant T N^{- 4}
< \varepsilon / 2$. Therefore since $X_{n_{\varepsilon}^{(i)} - 1} \in
\Omega_{\varepsilon}$ we have that $X_{n_{\varepsilon}^{(i)}}$ is in the band
$\Omega \backslash \Omega_{\varepsilon}$. Equivalently,
$\tau^{(i)}_{\varepsilon}$ is the the first time $t_n$ at which $X^{(i)}$ is
at a distance less or equal to $\varepsilon$ from the boundary $\partial
\Omega$ of the upper--half space. This stopping time is also a stopping time
for $B^{(i)}$. We note $B^{(i), \tau_{\varepsilon}^{(i)}}$ the corresponding
stopped process. Notice that $B^{(i), \tau_{\varepsilon}^{(i)}} \in
\Omega_{\varepsilon / 2}$ for all times. In all what follows, all random
walks, continuous or discrete, are meant as stopped process. For convenience
we might write $X^{(i)}_n$ instead of $X^{(i), \tau^{(i)}_{\varepsilon}}_n$,
or $B^{(i)}_k$ instead of $B^{(i), \tau^{(i)}_{\varepsilon}}_k$. By
construction we have always $W_t^y, X_n^{(i)}, B_k^{(i)} \in \Omega$.

\

\paragraph{Discrete martingales and their transforms}Given $f \in L^p
(\mathbb{R}^d, X)$ and its harmonic extension also noted $f$, we now aim at
approximating both $\mathcal{M}_t^f$ and $\mathcal{M}^i_f$ by discrete
martingales adapted to the action of the dyadic Riez transforms. An important
difference with the onedimensional case is that the discretization of not only
$\mathcal{M}^i_f$ but also $\mathcal{M}^f$ depends on the single Riesz
transform under study. For each $i \in [1, d]$, we define the approximate
$M^{(i), f}$ of $\mathcal{M}^f$ as
\[ M^{(i), f}_k \assign \mathcal{M}_0^f + \sum_{\ell = 1}^k \nabla f (B_{k -
   1}^{(i)}) \cdot \mathd B_k^{(i)}, \]
and the approximate $M^{(i), i}$ of $\mathcal{M}^i_t$ as
\[ M_k^{(i), i} \assign \sum_{\ell = 1}^k A_i \nabla f (B_{k - 1}^{(i)}) \cdot
   \mathd B_k^{(i)} . \]
The superscript $(i)$ refers to the fact that the discrete process is driven
by the $i$--th discrete random walk $B^{(i)}$, whereas the superscript $i$
refers to the fact that we are approximating the $i$--th martingale transform
$\mathcal{M}^i$. Our first goal is achieved:

\

\begin{lemma}
  \label{L: Lp estimate for Mii}For all $f$, all $i \in [1, d]$, we have
  $M_k^{(i), i} = \mathcal{S}_i M^{(i), f}_k$ and therefore
  \begin{equation}
    \| M_k^{(i), i} \|_p \leqslant \| \mathcal{S}_i \|_{p \rightarrow p}  \|
    M^{(i), f}_k \|_p . \label{eq:Lp bound for discrete martingales}
  \end{equation}
\end{lemma}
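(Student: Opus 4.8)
The plan is to establish the pointwise identity $M^{(i),i}_k = \mathcal{S}_i M^{(i),f}_k$ between $X$--valued functions on $I_0$ --- which I view as the dyadic probability space equipped with Lebesgue measure, so that the stochastic norm $\| \cdot \|_p$ is exactly $\| \cdot \|_{L^p(I_0,X)}$ and the operator $\mathcal{S}_i$ acts on these functions --- and then to read \eqref{eq:Lp bound for discrete martingales} off from the definition of $\| \mathcal{S}_i \|_{p \to p}$. The identity rests on the Cauchy--Riemann relation \eqref{eq:SidB} together with a short commutation argument controlled by scales.

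First I would record the relevant scales. For fixed $\ell$ the increment $\mathd B^{(i)}_\ell$ is, by construction, a linear combination of the tosses $\varepsilon_{(\ell-1)d+1}, \dots, \varepsilon_{(\ell-1)d+d}$, so its Haar expansion is supported on intervals of the layer $\mathcal{L}_\ell$, all of generation strictly larger than $(\ell-1)d$; dually, $B^{(i)}_{\ell-1}$ depends only on the tosses up to generation $(\ell-1)d$, so $\nabla f(B^{(i)}_{\ell-1})$ is constant on every interval of $\mathcal{D}_{(\ell-1)d}$. I would also use that $\mathcal{S}_i$ is local, in the sense that $\mathcal{S}_i h_K$ is a scalar multiple of a single Haar function supported in the parent $\hat K$ of $K$ (it equals $\mathcal{S} h_K$ when $K$ lies in the $i$--th slice and $0$ otherwise). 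Combining the two: if $\phi$ is constant on every interval of $\mathcal{D}_m$ and $\psi$ has Haar support in generations $> m$, then for each Haar function $h_K$ occurring in $\psi$ one has $\phi h_K = \phi|_J h_K$ with $J \in \mathcal{D}_m$ the interval containing $K$, and since $\mathcal{S}_i h_K$ is supported in $\hat K \subseteq J$ we get $\mathcal{S}_i(\phi h_K) = \phi|_J \, \mathcal{S}_i h_K = \phi \, \mathcal{S}_i h_K$; summing over $K$, $\mathcal{S}_i(\phi \psi) = \phi \, \mathcal{S}_i \psi$ (and likewise with $\phi$ being $X$--valued and $\psi$ scalar).

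With this in hand the computation is short. Since $\mathcal{S}_i$ is linear and kills constants, in particular the constant $\mathcal{M}_0^f = f(B^{(i)}_0)$,
\[ \mathcal{S}_i M^{(i),f}_k = \sum_{\ell=1}^k \mathcal{S}_i\bigl( \nabla f(B^{(i)}_{\ell-1}) \cdot \mathd B^{(i)}_\ell \bigr) = \sum_{\ell=1}^k \sum_{j=0}^d \mathcal{S}_i\bigl( \partial_j f(B^{(i)}_{\ell-1}) \, (\mathd B^{(i)}_\ell)_j \bigr) . \]
For each $\ell$ the multiplier $\partial_j f(B^{(i)}_{\ell-1})$ is constant on the intervals of $\mathcal{D}_{(\ell-1)d}$ whereas $(\mathd B^{(i)}_\ell)_j$ has Haar support in $\mathcal{L}_\ell$, so the commutation applies and then \eqref{eq:SidB} gives
\[ \sum_{j=0}^d \mathcal{S}_i\bigl( \partial_j f(B^{(i)}_{\ell-1}) \, (\mathd B^{(i)}_\ell)_j \bigr) = \nabla f(B^{(i)}_{\ell-1}) \cdot \bigl( \mathcal{S}_i \mathd B^{(i)}_\ell \bigr) = \nabla f(B^{(i)}_{\ell-1}) \cdot \bigl( A_i^{\ast} \mathd B^{(i)}_\ell \bigr) . \]
By the adjoint identity $\nabla f \cdot (A_i^{\ast} w) = (A_i \nabla f) \cdot w$ this is $(A_i \nabla f)(B^{(i)}_{\ell-1}) \cdot \mathd B^{(i)}_\ell$, and summing over $\ell$ reproduces the definition of $M^{(i),i}_k$. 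Finally, since $M^{(i),f}_k$ takes finitely many values for fixed $k$ it lies in $L^p(I_0,X)$, and the operator norm bound yields $\| M^{(i),i}_k \|_p = \| \mathcal{S}_i M^{(i),f}_k \|_p \leqslant \| \mathcal{S}_i \|_{p \to p} \, \| M^{(i),f}_k \|_p$, which is \eqref{eq:Lp bound for discrete martingales} (the inequality being of course vacuous if $\mathcal{S}_i$ is unbounded). The one point that requires genuine care is the commutation step --- verifying that $\nabla f(B^{(i)}_{\ell-1})$ truly lives at a scale strictly coarser than the Haar support of $\mathd B^{(i)}_\ell$ so that it may be pulled outside $\mathcal{S}_i$ --- but given the layer structure already set up this is a one-line generation count, and no deeper obstacle appears in this lemma.
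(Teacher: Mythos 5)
Your proof is correct and follows essentially the same route as the paper: both rest on the identity $\mathcal{S}_i\,\mathd B^{(i)}_\ell = A_i^{\ast}\,\mathd B^{(i)}_\ell$ from \eqref{eq:SidB}, commuting $\mathcal{S}_i$ past the predictable coefficients $\nabla f(B^{(i)}_{\ell-1})$, and the adjoint identity $\nabla f\cdot(A_i^{\ast}w)=(A_i\nabla f)\cdot w$. The only difference is that you make explicit the generation-counting argument justifying the commutation (coefficient constant on $\mathcal{D}_{(\ell-1)d}$, increment supported in strictly finer generations, $\mathcal{S}_i$ local to the parent), which the paper treats as immediate ``by construction.''
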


\begin{proof}
  By construction of the discrete random walks and property \eqref{eq:SidB},
  we have immediately
  \[ \begin{array}{lll}
       \mathcal{S}_i M^{(i), f}_k & = & \sum_{\ell = 1}^k \nabla f (B_{k -
       1}^{(i)}) \cdot \mathcal{S}_i \mathd B_k^{(i)} = \sum_{\ell = 1}^k
       \nabla f (B_{k - 1}^{(i)}) \cdot A_i^{\ast} \mathd B_k^{(i)}\\
       & = & \sum_{\ell = 1}^k A_i \nabla f (B_{k - 1}^{(i)}) \cdot \mathd
       B_k^{(i)} = M_k^{(i), i}
     \end{array} \]
  and the result follows.
\end{proof}

\begin{remark}
  If $X^{(i)}_n \assign B^{(i)}_{n N}$ is the coarse--grained random walk, we
  could also define the stochastic integrals based on $X^{(i)}$ instead of
  $B^{(i)}$, i.e.
  \[ M^{(i), f}_n \assign \mathcal{M}_0^f + \sum_{m = 1}^n \nabla f (X_{m -
     1}^{(i)}) \cdot \mathd X_m^{(i)}, \quad M_n^{(i), f_i} \assign \sum_{m =
     1}^n A_i \nabla f (X_{m - 1}^{(i)}) \cdot \mathd X_m^{(i)} . \]
  and we would still have by linearity of $\mathcal{S}_i$ that $M_n^{(i), f_i}
  = \mathcal{S}_i M^{(i), f}_n$. It is not a big simplification to the proof
  of convergence.
\end{remark}

\

\paragraph{Strategy and plan}Our main goal is now to show that one can pass
to the limit in inequality \eqref{eq:Lp bound for discrete martingales} of
Lemma \ref{L: Lp estimate for Mii}. As compared to the onedimensional case $d
= 1$ studied in {\cite{DomPet2023a}}, a few new difficulties arise.

First, \ the Riesz transforms are not directly defined by a stochastic
integral, but rather through a projection of a stochastic integral. This
projection is the conditional expectation upon arrival as in
\eqref{eq:conditonal expectation Ri}. This projection would not be easy to
handle as a limit of a discrete process. For this reason, part of our proof
relies on the weak formulation \eqref{eq:weak formulation Ri} and related
convergence results from discrete weak formulations to continuous weak
formulations.

Second, the estimate of the Riesz vector requires a specific weak formulation
that we can relate to the dyadic Riesz transforms.

Finally, the upper half space as well as its boundary are unbounded, and the
random processes involved in the stochastic representation need to start at
``infinity'', which involves a supplementary limiting process as compared to
the onedimensional case on the circle.

\

\section{Auxiliairy results}

Recall that all discrete random walks $B^{(i)}$, or $X^{(i)}$for $i \in [1,
d]$ are started at $(y, 0, \ldots, 0) \in \Omega$ and stopped at time
$\tau_{\varepsilon}^{(i)}$. We note $W^{t, x, \tau}$ the $(d +
1)$--dimensional Brownian motion started at time $t$ at point $x \in \Omega$
and stopped at time $\tau$ when hitting $\partial \Omega$. We define,

\begin{definition}
  We say that $X$ is weakly consistent with $W^{\tau}$, iff there exists a
  function $c \assign c (\varepsilon)$ tending to zero when $\varepsilon$ goes
  to zero, such that for all $n$,
  \[ | \mathbb{E} (X_{n + 1} - X_n | \widetilde{\mathcal{F}}_n) -\mathbb{E}
     (W^{t_n, X_n, \tau}_{t_{n + 1}} - W_{t_n}^{t_n, X_n, \tau} |
     \widetilde{\mathcal{F}}_n) | \leqslant \theta c (\varepsilon), \]
  and such that for all $(d + 1)$--multiindex $\alpha \assign \left( \alpha_0,
  \alpha_1 {, \ldots, \alpha_d}  \right)$ with $| \alpha | = 2$, there holds
  \[ | \mathbb{E} ((X_{n + 1} - X_n)^{\alpha} | \widetilde{\mathcal{F}}_n)
     -\mathbb{E} ((W^{t_n, X_n, \tau}_{t_{n + 1}} - W_{t_n}^{t_n, X_n,
     \tau})^{\alpha} | \widetilde{\mathcal{F}}_n) | \leqslant \theta c
     (\varepsilon), \]
\end{definition}

and claim,

\begin{lemma}
  \label{L: weak consistency} Let $i \in [1, d]$. The discrete stopped process
  $X^{(i)}$ is weakly consistent with the continuous stopped process
  $W^{\tau}$.
\end{lemma}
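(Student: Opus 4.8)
The plan is to reduce both inequalities in the definition of weak consistency to a comparison, over a single coarse step of length $\theta$ and conditionally on $\widetilde{\mathcal F}_n$, between the first two moments of the increment of the \emph{unstopped} discrete walk $B^{(i)}$ and those of the \emph{unstopped} $(d+1)$--dimensional Brownian motion started at $X^{(i)}_n$. These will match exactly, and the entire content of the lemma is then the verification that the two stopping times $\tau$ and $\tau^{(i)}_\varepsilon$ perturb the comparison by no more than $\theta\,c(\varepsilon)$.

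\emph{Step 1 (reduction, and the boundary layer).} Fix $n$ and condition on $\widetilde{\mathcal F}_n$, which is generated by the first $nNd$ dyadic generations, makes $X^{(i)}_n$ measurable, and forces the vertical height $h\assign X^{(i),0}_n\geqslant\varepsilon/2$. Since $\tau^{(i)}_\varepsilon$ is a coarse--scale stopping time, $\{t_n<\tau^{(i)}_\varepsilon\}$ is $\widetilde{\mathcal F}_n$--measurable; on it $\mathd X^{(i)}_{n+1}=\sum_{l=1}^{N}\mathd B^{(i)}_{nN+l}$ is the unstopped increment and it vanishes on the complement. On the Brownian side, $W^{t_n,X_n,\tau}_{t_{n+1}}-X_n$ agrees with the unstopped increment off the event $E_n\assign\{\tau\leqslant t_{n+1}\}$, and the reflection principle gives $\mathbb P(E_n\mid\widetilde{\mathcal F}_n)\leqslant 2\exp(-h^{2}/2\theta)\leqslant 2\exp(-\varepsilon^{2}/8\theta)$. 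Bounding the displacement (stopped or not) by $3\sup_{t_n\leqslant s\leqslant t_{n+1}}|W^{t_n,X_n}_s-X_n|$, whose $2k$--th moment is $\leqslant C_k\theta^{k}$ by Doob's inequality, and applying Cauchy--Schwarz, one gets that the contribution of $E_n$ to the first two moments of the Brownian displacement, and likewise that of the stopped event on the discrete side, are all at most $C\theta^{1/2}\exp(-\varepsilon^{2}/16\theta)$. As the construction sets $\varepsilon=1/N$ and $\theta=N\delta=T\varepsilon^{4}$, this equals $C\theta^{1/2}\exp(-1/(16T\varepsilon^{2}))$, which is $\leqslant\theta\,c(\varepsilon)$ for an explicit $c(\varepsilon)\to 0$. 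Hence it remains only to match the unstopped conditional moments.

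\emph{Step 2 (the discrete moments).} The components of $\mathd B^{(i)}_k$ are $\sqrt{2\delta}\,\varepsilon^{+}_{(k-1)d+i}$ in the vertical slot and $\sqrt{2\delta}\,\varepsilon^{-}_{(k-1)d+j}$, $j\in[1,d]$, in the horizontal slots; conditionally on the tosses of all generations $<m$, each $\varepsilon^{\pm}_m$ has mean $0$ and mean square $\tfrac12$, and $\varepsilon^{+}_m\varepsilon^{-}_m\equiv 0$. Every generation occurring in the sub--layers $\mathcal L_{nN+1},\ldots,\mathcal L_{(n+1)N}$ that build $\mathd X^{(i)}_{n+1}$ lies strictly above generation $nNd$, hence beyond $\widetilde{\mathcal F}_n$; taking conditional expectations one generation at a time yields $\mathbb E(\mathd X^{(i)}_{n+1}\mid\widetilde{\mathcal F}_n)=0$. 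For a multiindex $\alpha=e_p+e_q$ one expands $\mathd X^{(i)}_{n+1}=\sum_{l}\mathd B^{(i)}_{nN+l}$: cross terms $l\neq l'$ vanish because the later sub--layer's toss is centred given everything below it; within a sub--layer, two tosses of different generations vanish by the same argument, and the only same--generation pair is $(\varepsilon^{+}_m,\varepsilon^{-}_m)$ with $m=(k-1)d+i$, of identically zero product. Thus only the diagonal same--component terms survive, each equal to $2\delta\cdot\tfrac12=\delta$, and summing the $N$ sub--layers gives $\mathbb E((\mathd X^{(i)}_{n+1})^{\alpha}\mid\widetilde{\mathcal F}_n)=\theta$ if $\alpha=2e_p$ and $=0$ if $p\neq q$.

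\emph{Step 3 (conclusion, and the main obstacle).} The unstopped Brownian increment over time $\theta$ has precisely these moments — zero drift and covariance $\theta\,I_{d+1}$ — so by Step 1 both inequalities of the definition hold with the $c(\varepsilon)$ produced there, and $X^{(i)}$ is weakly consistent with $W^{\tau}$. The one real obstacle is the boundary layer of Step 1: one has to know that passing to the stopped processes costs $o(\theta)$ per step, which is exactly why the construction imposes $\theta=T\varepsilon^{4}\ll\varepsilon^{2}$, so that a centred Gaussian of variance $\theta$ puts mass only $\exp(-c/T\varepsilon^{2})$ at distance $\varepsilon/2$ from $\partial\Omega$; granting this, all the moment identities above are exact and the rest is routine bookkeeping with the martingale--difference structure of the Haar tosses.
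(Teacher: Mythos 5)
Your proof follows essentially the same route as the paper, which reduces the lemma to two moment computations (its Lemmas \ref{L: continuous moments} and \ref{L: discrete moments}): match the conditional first and second moments of the stopped discrete and continuous increments up to $\theta c(\varepsilon)$, the stopping perturbation being negligible because $\theta = T\varepsilon^4 \ll \varepsilon^2$. One slip in Step 2: conditionally on all generations $<m$, $(\varepsilon^{\pm}_m)^2 = \tmmathbf{1}(\varepsilon_{m-1}=\pm 1)$ is already determined and equals $0$ or $1$, not $\tfrac12$; the value $\tfrac12$ only appears after also averaging over generation $m-1$. For the first sub--layer of $\mathd X^{(i)}_{n+1}$ the parent generation can already be $\widetilde{\mathcal F}_n$--measurable (e.g.\ the vertical slot when $i=1$), so the diagonal second moments are $\theta(1+O(1/N))=\theta(1+c(\varepsilon))$ rather than exactly $\theta$ — precisely the correction the paper keeps in Lemma \ref{L: discrete moments} — but this is absorbed by the $\theta c(\varepsilon)$ tolerance in the definition, so your conclusion stands.
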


\

\begin{proof}
  This is an easy consequence of the following two lemmas.
\end{proof}

\begin{lemma}[Continuous moments]
  \label{L: continuous moments} Let $W^{t, x, \tau}$ as above. We have
  \[ \forall t \geqslant 0, \forall x \in \Omega, \quad \mathbb{E} (W_{t +
     \theta}^{\tau, t, x} - W_t^{\tau, t, x}) = 0. \]
  Moreover there exists a function $c_T (\varepsilon)$ tending to zero with
  $\varepsilon$ for all fixed $T$, such that for all coordinates $j, k \in [0,
  d]$
  \[ \forall t \geqslant 0, \forall x \in \Omega_{\varepsilon}, \quad
     \mathbb{E} (W_{t + \theta}^{t, x, \tau, j} - W_t^{t, x, \tau, j})^2 =
     \theta (1 + c_T (\varepsilon)), \]
  \[ \forall t \geqslant 0, \forall x \in \Omega_{\varepsilon}, \quad
     \mathbb{E} (W_{t + \theta}^{t, x, \tau, j} - W_t^{t, x, \tau, j}) (W_{t +
     \theta}^{t, x, \tau, k} - W_t^{t, x, \tau, k}) = \theta c_T (\varepsilon)
     \quad \tmop{for} j \neq k. \]
  Finally for all $p \geqslant 2$, there holds for all coordinates $j \in [0,
  d]$
  \[ \forall t \geqslant 0, \forall x \in \Omega, \forall p \geqslant 2, \quad
     \mathbb{E} (| W_{t + \theta}^{\tau, t, x, j} - W_t^{\tau, t, x, j} |^p)
     \lesssim \theta^{p / 2} . \]
\end{lemma}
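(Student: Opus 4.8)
\emph{First identity.} This is just optional stopping. For each coordinate $j\in[0,d]$ the stopped process $(W^{t,x,\tau,j}_s)_{s\geq t}$ is a martingale, hence $\mathbb{E}(W^{t,x,\tau}_{t+\theta}-W^{t,x,\tau}_{t})=0$ for every $x\in\Omega$.

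\emph{The $p$-th moment bound.} Here the plan is to dominate the stopped increment pathwise by the running maximum of the free Brownian increment. Since $x_0>0$ forces $\tau>t$ almost surely, on $\{\tau\geq t+\theta\}$ one has $W^{t,x,\tau,j}_{t+\theta}-W^{t,x,\tau,j}_{t}=W^{t,x,j}_{t+\theta}-W^{t,x,j}_{t}$, while on $\{t<\tau<t+\theta\}$ it equals $W^{t,x,j}_{\tau}-W^{t,x,j}_{t}$; in both cases
\[
\bigl|W^{t,x,\tau,j}_{t+\theta}-W^{t,x,\tau,j}_{t}\bigr|\ \leq\ \sup_{t\leq s\leq t+\theta}\bigl|W^{t,x,j}_{s}-W^{t,x,j}_{t}\bigr|.
\]
Doob's maximal inequality together with the Gaussian moment bounds then gives, for every $p\geq2$ and every $x\in\Omega$, $\mathbb{E}\bigl(|W^{t,x,\tau,j}_{t+\theta}-W^{t,x,\tau,j}_{t}|^p\bigr)\lesssim\mathbb{E}\bigl(|W^{t,x,j}_{t+\theta}-W^{t,x,j}_{t}|^p\bigr)\lesssim\theta^{p/2}$ (constants depending on $p$). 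This is the last assertion, and the case $p=4$ also supplies the uniform bound $\mathbb{E}(|\Delta^{\tau,j}|^4)\lesssim\theta^2$ used below, where I abbreviate $\Delta^{\tau,j}:=W^{t,x,\tau,j}_{t+\theta}-W^{t,x,\tau,j}_{t}$ and $\Delta^{j}:=W^{t,x,j}_{t+\theta}-W^{t,x,j}_{t}$.

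\emph{The second moments.} I would split on $\{\tau\geq t+\theta\}$, where $\Delta^{\tau,j}=\Delta^j$, to write
\[
\mathbb{E}\bigl(\Delta^{\tau,j}\Delta^{\tau,k}\bigr)=\mathbb{E}\bigl(\Delta^{j}\Delta^{k}\bigr)-\mathbb{E}\bigl(\Delta^{j}\Delta^{k}\,\mathbf{1}_{\tau<t+\theta}\bigr)+\mathbb{E}\bigl(\Delta^{\tau,j}\Delta^{\tau,k}\,\mathbf{1}_{\tau<t+\theta}\bigr),
\]
where $\mathbb{E}(\Delta^{j}\Delta^{k})$ equals $\theta$ if $j=k$ and $0$ otherwise, by independence of the coordinates of Brownian motion. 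The two remainder terms are controlled by Cauchy--Schwarz and the fourth-moment bound of the previous step: each is $\lesssim\theta\,\mathbb{P}(\tau<t+\theta)^{1/2}$. It only remains to show that $\mathbb{P}(\tau<t+\theta)$ is negligible in $\varepsilon$ when $x\in\Omega_\varepsilon$. Here only the vertical coordinate matters: $\tau$ is the first time the one-dimensional Brownian motion $W^{t,x,0}$ started at $x_0>\varepsilon$ hits $0$, so by the reflection principle and a Gaussian tail estimate $\mathbb{P}(\tau<t+\theta)=\mathbb{P}\bigl(\inf_{t\leq s\leq t+\theta}W^{t,x,0}_s\leq0\bigr)\leq\mathbb{P}\bigl(|W^{t,x,0}_{t+\theta}-x_0|\geq\varepsilon\bigr)\leq 2e^{-\varepsilon^2/(2\theta)}$. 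With the calibration $\theta=T\varepsilon^4$ fixed earlier this is $\leq2e^{-1/(2T\varepsilon^2)}$, which for each fixed $T$ tends to $0$ faster than any power of $\varepsilon$; absorbing the implied constants into a function $c_T(\varepsilon)$ of the admissible type then yields $\mathbb{E}(\Delta^{\tau,j})^2=\theta(1+c_T(\varepsilon))$ and $\mathbb{E}(\Delta^{\tau,j}\Delta^{\tau,k})=\theta\,c_T(\varepsilon)$ for $j\neq k$, as claimed.

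\emph{Main obstacle.} The only genuine point is this last estimate — that a Brownian motion launched at height exceeding $\varepsilon$ is absorbed at the boundary within the coarse step $\theta$ only with probability negligible in $\varepsilon$ — and it rests entirely on $\theta$ being chosen much smaller than $\varepsilon^2$ (indeed $\theta\sim\varepsilon^4$). Everything else is soft: the martingale property, the pathwise bound by a running maximum, Doob's inequality, Cauchy--Schwarz, and the elementary moments and tail of the Gaussian.
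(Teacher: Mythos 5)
Your proof is correct and complete; the paper itself gives no argument for this lemma (it simply defers to the one-dimensional companion paper), and the route you supply --- optional stopping for the first moment, the pathwise bound by the running maximum combined with Doob's inequality and Gaussian moments for the $p$-th moment, and the splitting on $\{\tau<t+\theta\}$ controlled via Cauchy--Schwarz, the reflection principle, and the calibration $\theta=T\varepsilon^4\ll\varepsilon^2$ --- is exactly the standard one. You also correctly isolate the single quantitative point on which everything rests, namely that for $x\in\Omega_\varepsilon$ absorption at the boundary within one coarse step has probability $O(e^{-1/(2T\varepsilon^2)})$, which is negligible of the required type $c_T(\varepsilon)$.
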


\begin{proof}
  The proof is the same as in the case $d = 1$. See {\cite{DomPet2023a}}.
\end{proof}

\begin{lemma}[Discrete Moments]
  \label{L: discrete moments}Let $i \in [1, d]$. Let $X^{(i)}$ the $i$--th
  stopped discrete random walk defined above. We have
  \[ \forall n \in \mathbb{N}, \quad \mathbb{E} \left( \mathd X^{(i)}_{n + 1}
     | {\widetilde{\mathcal{F}}_n}  \right) = 0 \]
  Moreover there exists a function $c (\varepsilon)$, tending to zero with
  $\varepsilon$, such that for all coordinates \ $j, k \in [0, d]$
  \[ \forall n \in \mathbb{N}, \quad \mathbb{E} ((\mathd X^{(i), j}_{n + 1})^2
     | \widetilde{\mathcal{F}}_n) = \theta (1 + c (\varepsilon)), \]
  \[ \forall n \in \mathbb{N}, \quad \mathbb{E} ((\mathd X^{(i), j}_{n + 1})
     (\mathd X^{(i), k}_{n + 1}) | \widetilde{\mathcal{F}}_n) = \theta c
     (\varepsilon), \quad \tmop{for} j \neq k. \]
  Finally for all $p \geqslant 2$, there holds for all coordinates $j \in [0,
  d]$
  \[ \mathbb{E} (| \mathd X_{n + 1}^{(i), j} |^p | \widetilde{\mathcal{F}}_n)
     \lesssim \theta^{p / 2} . \]
\end{lemma}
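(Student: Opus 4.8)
The plan is to compute the conditional moments of the increment $\mathd X^{(i)}_{n+1}$ directly from its definition as a sum of $N$ consecutive dyadic increments, namely $\mathd X^{(i)}_{n+1} = \sum_{l=1}^N \mathd B^{(i)}_{nN+l}$, using the explicit formula $\mathd B_k^H = \sqrt{2\delta}\,(\varepsilon^-_{(k-1)d+1},\ldots,\varepsilon^-_{(k-1)d+d})$ and $\mathd B_k^{(i),0} = \sqrt{2\delta}\,\varepsilon^+_{(k-1)d+i}$, together with the orthogonality properties of the tosses $\varepsilon_I$. First I would record the basic facts: each $\varepsilon_I$ is $\pm 1$ or $0$, $\mathbb{E}(\varepsilon_I \mid \mathcal{F}_{\hat I}) = 0$ on $I$, the split tosses satisfy $\varepsilon_i^- \varepsilon_i^+ = 0$ pointwise, and tosses from distinct generations along a branch are, after conditioning on the coarser $\sigma$-algebra $\widetilde{\mathcal{F}}_n$, martingale increments whose products have zero conditional expectation. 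Since $\widetilde{\mathcal{F}}_n$ is the $\sigma$-algebra generated by the first $nN$ dyadic steps (equivalently generations up to $\mathcal{D}_{nd}$), every toss appearing in $\mathd X^{(i)}_{n+1}$ is ``fresh'', so $\mathbb{E}(\varepsilon_{(nN+l-1)d+j} \mid \widetilde{\mathcal{F}}_n) = 0$, giving the first moment claim $\mathbb{E}(\mathd X^{(i)}_{n+1} \mid \widetilde{\mathcal{F}}_n) = 0$ immediately.

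For the second moments, I would expand $(\mathd X^{(i),j}_{n+1})^2$. For a horizontal coordinate $j$, this is $2\delta \big(\sum_{l=1}^N \varepsilon^-_{(nN+l-1)d+j}\big)^2$; the cross terms $\varepsilon^-_a \varepsilon^-_b$ with $a \neq b$ from different layers vanish in conditional expectation, while each diagonal term contributes $\mathbb{E}((\varepsilon^-_a)^2 \mid \widetilde{\mathcal{F}}_n) = \mathbb{P}(x \in \mathcal{D}_a^- \mid \widetilde{\mathcal{F}}_n)$, which is $1/2$ exactly (a left child is reached with conditional probability $1/2$ given the parent). Summing over $l = 1,\ldots,N$ gives $2\delta \cdot N \cdot \tfrac12 = N\delta = \theta$. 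Here there is genuinely no error term for the horizontal coordinates except the one coming from the stopping: the increments past $\tau^{(i)}_\varepsilon$ are zero, and since the walk is stopped only in a band of width $\varepsilon$ and $\varepsilon = 1/N$, this contributes at most $O(\varepsilon)$ in the relevant normalization — this is where the function $c(\varepsilon)$ enters, and I would estimate $\mathbb{P}(\tau^{(i)}_\varepsilon \in (t_n, t_{n+1}] \mid \widetilde{\mathcal{F}}_n)$ crudely. For $j = 0$, the same computation runs with $\varepsilon^+$ in place of $\varepsilon^-$ and again the conditional probability of landing in $\mathcal{D}_a^+$ is $1/2$, so $\mathbb{E}((\mathd X^{(i),0}_{n+1})^2 \mid \widetilde{\mathcal{F}}_n) = \theta(1 + c(\varepsilon))$ as well.

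For the off-diagonal covariances with $j \neq k$: if both are horizontal, $\mathd X^{(i),j}$ involves tosses from generations $\equiv j \pmod d$ and $\mathd X^{(i),k}$ involves tosses from generations $\equiv k \pmod d$, which are \emph{disjoint} sets of generations, so every product is a product of distinct fresh tosses and has zero conditional expectation — the covariance is exactly $0$, hence trivially $\theta c(\varepsilon)$, again up to the stopping correction. If $k = 0$ (say) and $j$ horizontal, then $\mathd X^{(i),0}$ uses $\varepsilon^+$ on generations $\equiv i \pmod d$ while $\mathd X^{(i),j}$ uses $\varepsilon^-$ on generations $\equiv j \pmod d$; if $j \neq i$ these generations are disjoint and the covariance is again exactly zero; if $j = i$ the two sets of generations coincide but on each such generation we are multiplying $\varepsilon^+_a \cdot \varepsilon^-_a = 0$ pointwise, so the product vanishes identically. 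Either way the covariance is $0$ up to stopping, giving $\theta c(\varepsilon)$.

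Finally, the $p$-th moment bound for $p \geq 2$ follows from Burkholder--Davis--Gundy (or Rosenthal's inequality) applied to the martingale $\sum_{l=1}^N \mathd B^{(i),j}_{nN+l}$ with $N$ increments, each bounded by $\sqrt{2\delta}$ in absolute value and with conditional variance $\le \delta$: the bracket is $\sum_l \mathbb{E}((\mathd B^{(i),j})^2 \mid \cdot) \lesssim N\delta = \theta$, so $\mathbb{E}(|\mathd X^{(i),j}_{n+1}|^p \mid \widetilde{\mathcal{F}}_n) \lesssim_p \theta^{p/2} + N \cdot (2\delta)^{p/2} \lesssim \theta^{p/2}$ since $N\delta^{p/2} = \theta \delta^{p/2-1} \le \theta^{p/2}$ for $p \ge 2$ (using $\delta \le \theta$). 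I expect the main obstacle to be the bookkeeping of the stopping time: one must verify that replacing the free walk by the stopped walk $B^{(i),\tau^{(i)}_\varepsilon}$ perturbs each conditional moment by only $\theta c(\varepsilon)$, which requires a uniform (in $n$ and in the starting point) bound on the conditional probability of stopping within a single coarse step, of the form $\mathbb{P}(\tau^{(i)}_\varepsilon \in (t_n,t_{n+1}] \mid \widetilde{\mathcal{F}}_n) \le c(\varepsilon)$ — plausibly because a random walk near a hyperplane within distance $O(\varepsilon)$ of it hits a band of width $\varepsilon$ with small probability over a time window $\theta \gg \varepsilon$, but it needs to be made precise, exactly as in the one-dimensional argument of \cite{DomPet2023a}. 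All the algebraic identities for the tosses are routine and I would not grind through them in detail.
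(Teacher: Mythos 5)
Your proof follows the same route as the paper's: expand $\mathd X^{(i)}_{n+1}=\sum_{l=1}^{N}\mathd B^{(i)}_{nN+l}$ and compute the conditional moments directly from the toss representation. The first--moment, covariance and $p$--th moment parts all match (the paper handles $p\geqslant 2$ by doing even $p$ by expansion and interpolating with H\"older, where you invoke Rosenthal/BDG; either works). However, one sub--claim in your diagonal second--moment computation is false, and it sits exactly where the paper locates the correction $c(\varepsilon)$. You assert that every diagonal term satisfies $\mathbb{E}\bigl((\varepsilon^{\pm}_a)^2\mid\widetilde{\mathcal{F}}_n\bigr)=\tfrac12$ ``exactly''. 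Since $(\varepsilon^{\pm}_m)^2=\tmmathbf{1}(\varepsilon_{m-1}=\pm1)$, this fails for the first fine step of the coarse block whenever the relevant indicator involves the toss $\varepsilon_{(nN)d}$: that toss belongs to the last generation of the preceding layer $\mathcal{L}_{nN}$ and is therefore $\widetilde{\mathcal{F}}_n$--measurable, so the corresponding term contributes $0$ or $2\delta$ deterministically rather than $\delta$ on average. (This occurs for the horizontal coordinate $j=1$, and for the vertical coordinate when $i=1$; for $2\leqslant j\leqslant d$ the conditional variance is exactly $\theta$, as the paper notes.) One anomalous term out of $N$ produces the relative error $c(\varepsilon)=O(1/N)=O(\varepsilon)$; that, and not the stopping, is the paper's $c(\varepsilon)$.

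Conversely, the stopping time, which you flag as the main obstacle and as the source of $c(\varepsilon)$, contributes nothing at this stage. By construction $\tau^{(i)}_{\varepsilon}$ tests membership of $\Omega_{\varepsilon}$ only at the coarse times $t_n$, so the stopped increment $\mathd X^{(i)}_{n+1}$ is either the full free increment (on the event $\{n<n^{(i)}_{\varepsilon}\}$) or identically zero; there is no partial increment and no conditional probability of ``stopping inside a coarse step'' to estimate. Moreover the paper's earlier observation that a single coarse step displaces the walk by less than $\varepsilon/2$ shows that a walk still in $\Omega_{\varepsilon}$ cannot even leave $\Omega_{\varepsilon/2}$ during the step. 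So your final formulas are correct, but the bookkeeping of where the error term actually comes from needs to be repaired as above.
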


\begin{proof}[discrete moments]
  Let $T > 0$ and $N \in \mathbb{N}$. Recall that the discretisation
  parameters $\delta$, $\theta$, and $\varepsilon$ are defined as
  \[ T \backassign N^5 \delta, \quad \theta \assign N \delta, \quad T \assign
     N^4 \theta, \quad \varepsilon \assign 1 / N. \]
  The increment
  \[ \mathd X_{n + 1}^{(i)} \assign X^{(i)}_{n + 1} - X_n^{(i)} = \sum_{k =
     1}^N \mathd B^{(i)}_{n N + k} \]
  involves $N$ consecutive steps of $B^{(i)}$, therefore $N$ layers of the
  dyadic tree, hence $N d$ generations. The layers involved are $\cup_{k =
  1}^N \mathcal{L}_{n N + k}$. Precisely,
  \[ \begin{array}{ll}
       \mathd X_{n + 1}^{(i)} (x) & = \sqrt{2 \delta}  \left( \sum_{k = 1}^N
       \varepsilon_{ (n N + k - 1) d + i}^+ (x), \sum_{k = 1}^N
       \varepsilon_{(n N + k - 1) d + 1}^- (x), \right.\\
       & \left. \sum_{k = 1}^N \varepsilon_{(n N + k - 1) d + 2}^- (x),
       \ldots, \sum_{k = 1}^N \varepsilon_{(n N + k - 1) d + d}^- (x) \right)
       .
     \end{array} \]
  we have immediately $\mathbb{E} (\mathd X_{n + 1}^{(i)} | \mathcal{F}_{n N})
  = (0, \ldots, 0)$. For the coordinate $j = 0$, we calculate the variance
  \[ \  \]
  \[ \begin{array}{lll}
       \mathbb{V} (\mathd X_{n + 1}^{(i), 0} | \widetilde{\mathcal{F}}_n) & =
       & 2 \delta \mathbb{V} \left( \sum_{k = 1}^N \varepsilon_{(n N + k - 1)
       d + j}^+ | \widetilde{\mathcal{F}}_n \right)\\
       & = & 2 \delta (\tmmathbf{1} (\varepsilon_{(n N + k - 1) d} = + 1))^2
       + 2 \delta \sum_{k = 1}^{N - 1} \frac{1}{2} [0^2 + 1^2]\\
       & = & \frac{2 \delta N}{2} (1 + c (\varepsilon)) = \theta (1 + c
       (\varepsilon))
     \end{array} \]
  and similarly for $j = 1$,
  \[ \mathbb{V} (\mathd X_{n + 1}^{(i), 1} | \widetilde{\mathcal{F}}_n) =
     \theta (1 + c (\varepsilon)) . \]
  Now for $2 \leqslant j \leqslant d$, we have
  \[ \mathbb{V} (\mathd X_{n + 1}^{(i), j} | \widetilde{\mathcal{F}}_n) = 2
     \delta \mathbb{V} \left( \sum_{k = 1}^N \varepsilon_{(n N + k - 1) d +
     j}^{\pm} | \widetilde{\mathcal{F}}_n \right) = 2 \delta \sum_{k = 1}^N
     \frac{1}{2} [0^2 + 1^2] = \theta . \]
  Regarding the covariation of the different coordinates of $\mathd X_{n +
  1}^{(i)}$, we have simply
  \[ \forall \alpha, \beta \in [0, d], \alpha \neq \beta, \quad \mathbb{E}
     (\mathd X_{n + 1}^{(i), \alpha} \mathd X_{n + 1}^{(i), \beta} |
     \widetilde{\mathcal{F}}_n) = 0. \]
  Indeed, increments of different coordinates involve tosses (i.e. Haar
  functions) with either pairwise--disjoint or included supports.
  
  The last estimate for moments of order $p \geqslant 2$ is first proved for
  even values of $p$ and then extended using H{\"o}lder to all values of $p
  \geqslant 2$. See {\cite{DomPet2023a}}.
\end{proof}

\section{Auxiliary convergence results}\label{S: auxiliary convergence
results}

The following two auxiliary convergence results are needed.

\begin{lemma}
  \label{L: weak convergence} (Weak convergence) Let $y > 0$ fixed. Let $i \in
  [1, d]$ fixed. Let $T > 0$. Let $\psi$ smooth on $\Omega$. Assume weak
  consistency. Then we have
  \[ \mathbb{E} \psi (X_T^{(i), \tau_{\varepsilon}}) =\mathbb{E} \psi (W_T^{y,
     \tau}) + c_{\psi, T} (\varepsilon) . \]
\end{lemma}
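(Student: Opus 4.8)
The plan is to realize $\mathbb{E}\,\psi(X_T^{(i),\tau_\varepsilon})-\mathbb{E}\,\psi(W_T^{y,\tau})$ as a telescoping sum over the coarse time steps $t_n=n\theta$, $n=0,\dots,N^4$, of the difference between advancing one discrete step of $X^{(i)}$ and advancing the continuous stopped Brownian motion by time $\theta$, and then to control each term by Taylor expansion to second order, invoking weak consistency (Lemma~\ref{L: weak consistency}) to match the first and second conditional moments and the $p$-th moment bounds (Lemmas~\ref{L: continuous moments} and~\ref{L: discrete moments}) to dispose of the third-order remainder.

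Concretely, first I would introduce the ``hybrid'' processes: for each $n$, start at the common value $X_n^{(i)}$ at time $t_n$, run $X^{(i)}$ discretely for the remaining $N^4-n$ coarse steps in one copy, and in the other copy run the continuous stopped Brownian motion $W^{t_n,X_n^{(i)},\tau}$ from $t_n$ to $T$. Writing $\Phi(t,x)\assign \mathbb{E}\,\psi(W_T^{t,x,\tau})$, which is a bounded smooth space-time harmonic-type function on $[0,T]\times\Omega$ (parabolic, hence $\Phi$ and its spatial derivatives up to order three are bounded on the relevant region since $\psi$ is smooth and $T$ is fixed), the telescoping identity becomes
\[
\mathbb{E}\,\psi(X_T^{(i),\tau_\varepsilon})-\mathbb{E}\,\psi(W_T^{y,\tau})
=\sum_{n=0}^{N^4-1}\mathbb{E}\Bigl[\Phi\bigl(t_{n+1},X_{n+1}^{(i)}\bigr)-\Phi\bigl(t_n,X_n^{(i)}\bigr)\Bigr]
=\sum_{n=0}^{N^4-1}\mathbb{E}\Bigl[\mathbb{E}\bigl(\Phi(t_{n+1},X_{n+1}^{(i)})\mid\widetilde{\mathcal{F}}_n\bigr)-\Phi(t_n,X_n^{(i)})\Bigr],
\]
where I have used that $\Phi(t_{n+1},\cdot)$ composed with the continuous flow from $t_n$ equals $\Phi(t_n,X_n^{(i)})$ by the tower property (the Markov/martingale property of $W^\tau$), so each summand is exactly the one-step discrepancy between the discrete walk and the continuous one. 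Expanding $\Phi(t_{n+1},X_{n+1}^{(i)})=\Phi(t_{n+1},X_n^{(i)})+\mathD_x\Phi(t_{n+1},X_n^{(i)})\cdot\mathd X_{n+1}^{(i)}+\tfrac12\sum_{\alpha:|\alpha|=2}\mathD_x^\alpha\Phi(t_{n+1},X_n^{(i)})(\mathd X_{n+1}^{(i)})^\alpha+R_n$ and doing the same for the continuous increment, the $|\alpha|\le 1$ and $|\alpha|=2$ terms cancel up to $\theta\,c(\varepsilon)$ per step by weak consistency (noting $\mathD_x^\alpha\Phi$ is bounded), and since there are $N^4=T/\theta$ steps this contributes $T\cdot c(\varepsilon)=c_{\psi,T}(\varepsilon)$ in total.

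The main obstacle is the cubic remainder term $R_n$: one needs $\sum_n \mathbb{E}|R_n|=c_{\psi,T}(\varepsilon)$. Here I would bound $|R_n|\lesssim \|\mathD_x^3\Phi\|_\infty\,|\mathd X_{n+1}^{(i)}|^3$ (and similarly for the continuous piece), then use the third-moment estimates $\mathbb{E}(|\mathd X_{n+1}^{(i),j}|^3\mid\widetilde{\mathcal{F}}_n)\lesssim\theta^{3/2}$ from Lemma~\ref{L: discrete moments} and $\mathbb{E}|W_{t+\theta}^{\tau,t,x,j}-W_t^{\tau,t,x,j}|^3\lesssim\theta^{3/2}$ from Lemma~\ref{L: continuous moments}, so each $\mathbb{E}|R_n|\lesssim\theta^{3/2}$ and the sum over $N^4=T/\theta$ steps is $\lesssim T\,\theta^{1/2}=T^{3/2}N^{-2}$, which tends to zero as $N\to\infty$ (equivalently $\varepsilon\to 0$) for fixed $T$; this is absorbed into $c_{\psi,T}(\varepsilon)$. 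One subtlety to address is that weak consistency as stated compares $\mathd X_{n+1}^{(i)}$ with the continuous increment starting from the \emph{same} point $X_n^{(i)}$, so the Taylor expansions of the discrete and continuous steps must both be centered at $(t_n\text{ or }t_{n+1},X_n^{(i)})$; replacing $t_{n+1}$ by $t_n$ in the derivative coefficients costs another factor $\theta$ times boundedness of $\partial_t\mathD_x^\alpha\Phi$, again summable. Finally, I would note that all boundedness of derivatives of $\Phi$ needed is uniform only on $[0,T]\times\Omega_{\varepsilon/2}$ — which is where the stopped walks live — but since $\psi$ and its harmonic/parabolic extension are smooth and we only ever use finitely many derivatives with $T$ fixed, these constants may depend on $\psi$ and $T$ (hence the notation $c_{\psi,T}(\varepsilon)$) but not on $\varepsilon$, completing the argument.
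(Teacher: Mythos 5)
Your proposal is correct and follows essentially the same route as the paper: both define the backward-heat-equation solution $u(t,x)=\mathbb{E}(\psi(W_T^{y,\tau})\mid W_t^{y,\tau}=x)$ (your $\Phi$), telescope over the coarse steps $t_n$, insert the continuous one-step evolution via the Markov/Ito identity, and Taylor-expand around the common point $(t_n,X_{t_n})$ so that weak consistency controls the first and second moments and the $p$-th moment bounds kill the cubic remainder. Your write-up supplies the quantitative bookkeeping ($N^4\theta^{3/2}=T\theta^{1/2}\to 0$, the $t_{n+1}$ versus $t_n$ centering) that the paper's sketch leaves implicit.
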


\begin{proof}
  This is a consequence of the weak consistency from Lemma \ref{L: weak
  consistency}. See e.g. {\cite{Tal1986n,KloPla1992}} where a slightly
  different definition of weak consistency is used. We sketch the proof
  adapted to our situation and notations. The function defined for all $t \in
  [0, T]$, $x \in \Omega$ as
  \[ u (t, x) \assign \mathbb{E} (\psi (W^{y, \tau}_T)  | W^{y, \tau}_t = x),
  \]
  is a solution to the backward heat equation
  \[ \partial_t u + \frac{1}{2} \Delta u = 0, \quad u (T, x) = \psi (x), \]
  and Ito formula reads
  \[ \psi (W^{y, \tau}_T) = u (T, W^{y, \tau}_T) = u (0, y) + \int_0^T \nabla
     u (\psi (W^{y, \tau}_s)) \cdot \mathd W^{y, \tau}_s + \int_0^T \left(
     \partial_t u + \frac{1}{2} \Delta u \right) (s, W^{y, \tau}_s) \mathd s,
  \]
  where the last term is null. Taking expectation allows to compare with the
  discrete process
  \[ \mathbb{E} \psi (X_T^{(i), \tau_{\varepsilon}}) -\mathbb{E} \psi (W_T^{y,
     \tau}) =\mathbb{E}u (T, X_T^{(i), \tau_{\varepsilon}}) - u (0, y)
     =\mathbb{E} [u (T, X_T^{(i), \tau_{\varepsilon}}) - u (0, X_0^{(i),
     \tau_{\varepsilon}})] . \]
  The last term is written as a telescopic sum, with summands
  \begin{eqnarray*}
    \mathbb{E} [u (t_{n + 1}, X_{t_{n + 1}}^{(i), \tau_{\varepsilon}}) - u
    (t_n, X_{t_n}^{(i), \tau_{\varepsilon}})] & = & \mathbb{E}u (t_{n + 1},
    X_{t_{n + 1}}^{(i), \tau_{\varepsilon}}) -\mathbb{E}u \left( t_{n + 1},
    W^{t_n, X_{t_n}}_{t_{n + 1}} \right)\\
    & = & \mathbb{E} \{ u (t_{n + 1}, X_{t_{n + 1}}^{(i),
    \tau_{\varepsilon}}) - u (t_n, X_{t_n}) \}\\
    &  & -\mathbb{E} \left\{ u \left( t_{n + 1}, W^{t_n, X_{t_n}}_{t_{n + 1}}
    \right) - u (t_n, X_{t_n}) \right\},
  \end{eqnarray*}
  where we have used again Ito formula to replace $u (t_n, X_{t_n}^{(i),
  \tau_{\varepsilon}})$ by $\mathbb{E}u \left( t_{n + 1}, W^{t_n,
  X_{t_n}}_{t_{n + 1}} \right)$. A Taylor expansion of the last two terms
  around the common point $(t_n, X_{t_n})$ exhibits the differences of
  increments $\{ X_{t_{n + 1}} - X_{t_n} \} - \left\{ W^{t_n, X_{t_n}}_{t_{n +
  1}} - W^{t_n, X_{t_n}}_{t_n} \right\}$ and higher moments. Those are
  estimated thanks to weak consistency as in the onedimensional case, yielding
  the desired result.
\end{proof}

\begin{lemma}
  \label{L: convergence discrete martingale transforms}(Weak convergence of
  discrete martingale transforms) Let $y > 0$ fixed. Let $i \in [1, d]$ fixed.
  Let $T > 0$. Let $f$ harmonic as above.
  \[ \| f (X_T^{(i)}) - M_T^{(i), f} \|_p = c_T (\varepsilon) \]
\end{lemma}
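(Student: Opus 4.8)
The goal is to show that the discrete martingale $M_T^{(i),f}$, obtained by the discrete stochastic integral of $\nabla f$ against the increments of $B^{(i)}$ (equivalently $X^{(i)}$), is close in $L^p$ to the value $f(X_T^{(i)})$ of the harmonic extension at the stopped endpoint. The natural strategy mirrors the one-dimensional case: $f(X_T^{(i)})$ admits its own ``discrete Ito expansion'' along the coarse-grained random walk, and the difference between that expansion and $M_T^{(i),f}$ is a sum of second- and higher-order Taylor remainders which we control using the moment estimates of Lemma~\ref{L: discrete moments} together with the smoothness of $f$.

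\textbf{Step 1: telescoping.} Write $f(X_T^{(i)}) - M_0^{(i),f} = \sum_{n=1}^{N^4} \big( f(X_n^{(i)}) - f(X_{n-1}^{(i)}) \big)$ and, on each summand, Taylor-expand $f$ around $X_{n-1}^{(i)}$ to second order:
\[
  f(X_n^{(i)}) - f(X_{n-1}^{(i)}) = \nabla f(X_{n-1}^{(i)}) \cdot \mathd X_n^{(i)} + \tfrac12 \mathD^2 f(X_{n-1}^{(i)})[\mathd X_n^{(i)}, \mathd X_n^{(i)}] + R_n,
\]
where $R_n$ is the third-order remainder, bounded by $C \sup |\mathD^3 f| \cdot |\mathd X_n^{(i)}|^3$ on the relevant compact region (using that $X^{(i)}$ is stopped in $\Omega_{\varepsilon/2}$ and, after conditioning on the fixed starting height $y$, stays in a fixed bounded set up to an event of small probability — one has to be slightly careful here and may need the crude $p$-th moment bound $\mathbb{E}|\mathd X_n^{(i),j}|^p \lesssim \theta^{p/2}$ to handle the tails). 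Summing, $f(X_T^{(i)}) - M_T^{(i),f}$ equals $\sum_n \big( \tfrac12 \mathD^2 f(X_{n-1}^{(i)})[\mathd X_n^{(i)}, \mathd X_n^{(i)}] + R_n \big)$.

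\textbf{Step 2: the quadratic term.} The key observation is that $f$ is harmonic, so $\Delta f = \sum_{j=0}^d \partial_j^2 f = 0$. By Lemma~\ref{L: discrete moments}, conditionally on $\widetilde{\mathcal{F}}_{n-1}$ the increments satisfy $\mathbb{E}((\mathd X_n^{(i),j})^2|\widetilde{\mathcal{F}}_{n-1}) = \theta(1+c(\varepsilon))$ for each $j$ and $\mathbb{E}(\mathd X_n^{(i),j}\mathd X_n^{(i),k}|\widetilde{\mathcal{F}}_{n-1}) = \theta c(\varepsilon)$ for $j\neq k$ (in fact $0$ for the off-diagonal terms, per the proof there). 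Hence $\mathbb{E}\big( \mathD^2 f(X_{n-1}^{(i)})[\mathd X_n^{(i)}, \mathd X_n^{(i)}] \,\big|\, \widetilde{\mathcal{F}}_{n-1}\big) = \theta(1+c(\varepsilon)) \sum_j \partial_j^2 f(X_{n-1}^{(i)}) + \theta c(\varepsilon)(\dots) = \theta c(\varepsilon)(\dots)$ since $\Delta f = 0$; the residual is $O(\theta c(\varepsilon))$ times a bounded quantity (and controlled tails). The martingale-difference part of the quadratic term, $\tfrac12 \big( \mathD^2 f[\mathd X_n^{(i)}]^{\otimes 2} - \mathbb{E}[\cdots|\widetilde{\mathcal{F}}_{n-1}]\big)$, forms a martingale difference sequence with increments of $L^2$-size $O(\theta)$, so by the UMD/Burkholder inequality (or simply orthogonality of martingale differences when $p=2$, plus interpolation / the vector-valued square-function estimate for general $p$) its total $L^p$-norm over the $N^4$ steps is $O\big( (N^4 \theta^2)^{1/2}\big) = O\big( T^{1/2} N^{-2} \big) = c_T(\varepsilon)$.

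\textbf{Step 3: remainders and assembly.} The remainder terms $R_n$ are bounded in $L^p$ by $\lesssim \mathbb{E}|\mathd X_n^{(i)}|^{3p}{}^{1/p} \lesssim \theta^{3/2}$ (up to the tail correction), so $\sum_n \|R_n\|_p \lesssim N^4 \theta^{3/2} = T^{3/2} N^{-2} = c_T(\varepsilon)$; likewise the $O(\theta c(\varepsilon))$ conditional-expectation residuals sum to $O(N^4 \theta c(\varepsilon)) = O(T c(\varepsilon))$ after taking absolute values inside, which is $c_T(\varepsilon)$. Collecting all three contributions gives $\|f(X_T^{(i)}) - M_T^{(i),f}\|_p = c_T(\varepsilon)$.

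\textbf{Main obstacle.} The delicate point is the unboundedness of $\Omega$ and of $f$ and its derivatives: the Taylor remainder bounds are only clean if $X^{(i)}$ stays in a bounded region where $\mathD^3 f$ is bounded, which is not literally true. The remedy is to combine the fact that $X^{(i)}$ starts at the fixed height $y$ and is stopped on hitting $\Omega\setminus\Omega_\varepsilon$ with the uniform $p$-th moment bounds of Lemma~\ref{L: discrete moments} (and their continuous analogues) to show the random walk is, up to a set of probability $c_T(\varepsilon)$, confined to a fixed compact set depending only on $y$ and $T$; on the exceptional set one uses the polynomial growth of $f$ and Hölder. Making this localization rigorous — and uniform in $\varepsilon$ for fixed $T$ — is the part requiring the most care, though it is entirely parallel to the argument in \cite{DomPet2023a}. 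A secondary subtlety is that $f$ is $X$-valued, so Step~2's square-function estimate must be the vector-valued (UMD) Burkholder inequality rather than the scalar one.
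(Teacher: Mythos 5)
Your overall architecture (telescope, Taylor to third order, kill the diagonal Hessian term with harmonicity, bound the remainder by the $p$-th moment estimates) is the right one, but there are two places where the proof as written does not close, and both stem from your decision to telescope along the coarse walk $X^{(i)}$ rather than along the fine walk $B^{(i)}$ as the paper does.

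First, a bookkeeping gap: $M_T^{(i),f}$ is defined as $\sum_{k=1}^{N^5}\nabla f(B^{(i)}_{k-1})\cdot\mathd B^{(i)}_k$, whereas the first-order term produced by your coarse telescoping is $\sum_{n=1}^{N^4}\nabla f(X^{(i)}_{n-1})\cdot\mathd X^{(i)}_n$. These are not equal; their difference is the martingale transform $\sum_{n}\sum_{l=1}^{N}\bigl[\nabla f(B^{(i)}_{(n-1)N+l-1})-\nabla f(X^{(i)}_{n-1})\bigr]\cdot\mathd B^{(i)}_{(n-1)N+l}$, which you never estimate. It is controllable (increments of size $O(N\delta)$, hence $L^2$-norm $O(N^{3.5}\delta)=O(TN^{-3/2})$), and the paper's Remark notes one could redefine $M^{(i),f}$ on the coarse grid, but for the lemma as stated you must either add this estimate or explicitly switch definitions.

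Second, and more seriously, Step 2. At the coarse scale the centered quadratic increment $d_n\assign\tfrac12\bigl(\mathD^2f(X_{n-1})[\mathd X_n,\mathd X_n]-\mathbb{E}[\,\cdot\,|\widetilde{\mathcal F}_{n-1}]\bigr)$ has $L^p$-size of order $\theta$ with \emph{no} gain (it contains the double sum $\sum_{l\neq l'}\mathd B^j_l\mathd B^j_{l'}$, whose fluctuation is genuinely of order $N\delta=\theta$), so the triangle inequality gives only $N^4\theta=T$ and you are forced into a square-function argument across the $N^4$ blocks. The bound you assert, $\|\sum_n d_n\|_p\lesssim(\sum_n\|d_n\|^2)^{1/2}=(N^4\theta^2)^{1/2}$, is a type-2 estimate: for $X$-valued martingale differences it is \emph{not} a consequence of the UMD property (UMD gives randomization, but passing from $\|\sum r_nd_n\|$ to $(\sum\|d_n\|^2)^{1/2}$ requires type 2, which UMD spaces such as $L^{3/2}$ do not have). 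The argument can be repaired — every UMD space has some nontrivial type $q>1$, and $(N^4)^{1/q}\theta=TN^{4/q-4}\to0$ — but you neither invoke this nor notice the issue. The paper's proof is built precisely to avoid it: telescoping at the fine scale, the quadratic term in each coarse block is $\sum_{j}\partial_j^2f(B_{(n-1)N})\sum_{l=1}^N(\mathd B^{j}_{(n-1)N+l})^2$ (no cross terms $l\neq l'$), harmonicity lets one subtract $\theta\Delta f=0$, and the resulting $\sum_{l=1}^N[(\mathd B^j)^2-\delta]=\delta\sum_l\alpha_l$ is a \emph{scalar} $N$-step $\pm1$ walk multiplied by a bounded $X$-valued coefficient, hence of $L^p$-size $\delta N^{1/2}=\theta N^{-1/2}$ per block; the crude triangle inequality over $N^4$ blocks then yields $O(TN^{-1/2})$ with no vector-valued martingale inequality whatsoever. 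Your closing remarks about localization are reasonable but orthogonal to these two points.
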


\begin{proof}
  The estimates are handeled slightly differently as in the onedimensional
  case in order to take advantage of the multidimensional harmonicity of $f$.
  Recalling that
  \[ \begin{array}{lll}
       M_T^{(i), f} - M_0^{(i), f} & \assign & \sum_{k = 1}^{N^5} \sum_{j = 1,
       d} \partial_j f (B^{(i)}_{k - 1}) \mathd B_k^{(i), j},
     \end{array} \]
  we compare with the Taylor expansion
  \begin{eqnarray*}
    f (X_T^{(i)}) - f (X_0^{(i)}) & = & f (B_T^{(i)}) - f (B^{(i)}_0) =
    \sum_{k = 1}^{N^5} [f (B_k^{(i)}) - f (B_{k - 1}^{(i)})]\\
    & = & \sum_{k = 1}^{N^5} \sum_{j = 1, d} \partial_j f (B^{(i)}_{k - 1})
    \mathd B_k^{(i), j} + \frac{1}{2} \sum_{k = 1}^{N^5} \sum_{j_1, j_2 = 1,
    d} \partial^2_{j_1 j_2} f (B^{(i)}_{k - 1}) \mathd B^{(i), j_1}_k \mathd
    B^{(i), j_2}_k\\
    &  & + \sum_{k = 1}^{N^5} R_3^f (B^{(i)}_{k - 1}, \mathd B^{(i)}_k) .\\
    & = & \sum_{k = 1}^{N^5} \sum_{j = 1, d} \partial_j f (B^{(i)}_{k - 1})
    \mathd B_k^{(i), j} + \frac{1}{2} \sum_{k = 1}^{N^5} \sum_{j = 1, d}
    \partial^2_j f (B^{(i)}_{k - 1}) (\mathd B^{(i), j}_k)^2\\
    &  & + \sum_{k = 1}^{N^5} R_3^f (B^{(i)}_{k - 1}, \mathd B^{(i)}_k) .
  \end{eqnarray*}
  where the last terms are Taylor remainders. Recalling that $M_0^{(i), f} = f
  (B^{(i)}_0)$, it remains to estimate
  \[ f (X_T^{(i)}) - M_T^{(i), f} = \frac{1}{2} \sum_{k = 1}^{N^5} \sum_{j_1,
     j_2 = 1, d} \partial^2_{j_1 j_2} f (B^{(i)}_{k - 1}) \mathd B^{(i),
     j_1}_k \mathd B^{(i), j_2}_k + \sum_{k = 1}^{N^5} R_3^f (B^{(i)}_{k - 1},
     \mathd B^{(i)}_k) \backassign A + B \]
  We have $| R_3^f (B_{k - 1} (x), \mathd B_k (x)) | \lesssim \| D^3 f
  \|_{\infty} \delta^{3 / 2}$ therefore $\| B \|_p \lesssim N^5 \delta^{3 / 2}
  = c_T (\varepsilon)$.
  
  Split now $A = \sum_{n = 1}^{N^4} A_n$ in $N^4$ blocks $(A_n)_{n = 1, N^4}$
  of $N$ terms each, and split further $A_n = B_n + C_n$ with
  \[ B_n \assign \sum_{l = 1}^N \sum_{j = 1}^d [\partial^2_j f (B^{(i)}_{(n -
     1) N + l - 1}) - \partial^2_j f (B^{(i)}_{(n - 1) N})] (\mathd B^{(i),
     j}_{(n - 1) N + l})^2, \]
  \[ C_n \assign \sum_{j = 1}^d \partial^2_j f (B^{(i)}_{(n - 1) N}) \sum_{l =
     1}^N (\mathd B^{(i), j}_{(n - 1) N + l})^2 . \]
  We estimate first $| B_n | \lesssim N \| D^3 f \|_{\infty} \left( N
  \sqrt{\delta} \right) \delta \lesssim N^2 \delta^{3 / 2}$. Therefore
  \[ \left\| \sum_1^{N^4} B_n \right\|_p \lesssim N^4 (N^2 \delta^{3 / 2}) =
     c_T (\varepsilon) . \]
  Recalling that $f$ is harmonic we can write equivalently
  \[ C_n \assign \sum_{j = 1}^d \partial^2_j f (B^{(i)}_{(n - 1) N}) \left[
     \sum_{l = 1}^N (\mathd B^{(i), j}_{(n - 1) N + l})^2 - \theta \right] .
  \]
  Notice that in the multidimensional case, we have that for a given
  {\tmem{fixed}} $j$, successive increments $\mathd B^{(i), j}_k$ and $\mathd
  B^{(i), j}_m$, $m \neq k$ are independent random variables, and the same
  holds for the sequence $(\mathd B^{(i), j}_{(n - 1) N + l})^2$, $1 \leqslant
  l \leqslant N$, with $\mathbb{E} (\mathd B^{(i), j}_{(n - 1) N + l})^2 =
  \delta$. Hence
  \[ \sum_{l = 1}^N (\mathd B^{(i), j}_{(n - 1) N + l})^2 - \theta = \sum_{l =
     1}^N [(\mathd B^{(i), j}_{(n - 1) N + l})^2 - \delta] \backassign \delta
     \sum_{l = 1}^N \alpha_{(n - 1) N + l - 1}, \]
  where $\alpha_k = \pm 1$, since $(\mathd B^{(i), j}_{(n - 1) N + l})^2 = 0$
  or $(\mathd B^{(i), j}_{(n - 1) N + l})^2 = 2 \delta$ with equal
  probability. In other words $\sum_{l = 1}^N \alpha_{(n - 1) N + l - 1}$ is a
  discrete $N$ step random walk with $\pm 1$ steps. It follows, recalling
  moments of discrete random walks, that
  \[ \| C_n \|_p \lesssim \| D^2 f \|_{\infty} \delta \left\| \sum_{l = 1}^N
     \alpha_{(n - 1) N + l - 1} \right\|_p \lesssim \| D^2 f \|_{\infty}
     \delta (\theta^{p / 2})^{1 / p} . \]
  Finally
  \[ \left\| \sum_{n = 1}^{N^4} C_n \right\|_p \lesssim N^4 \delta \| D^2 f
     \|_{\infty} (N \delta)^{1 / 2} = (N^5 \delta) \| D^2 f \|_{\infty} N^{- 1
     / 2} \delta^{1 / 2} = c_T (\varepsilon) . \]
  This concludes the proof of Lemma \ref{L: convergence discrete martingale
  transforms}.
\end{proof}

\

\

\section{Proof of the main results}

\subsection{Upper bound for a single Riesz transform}

\begin{lemma}[Convergence of martingales in $L^p$]
  \label{L: convergence}Let $y > 0$ fixed. Let $i \in [1, d]$ fixed. Let $f
  \in L^p (\partial \Omega)$ and harmonic in $\Omega$. We have
  \[ \lim_{T \rightarrow \infty} \lim_{\varepsilon \rightarrow 0} \mathbb{E} |
     M_T^f |^p =\mathbb{E} | f (W^{y, \tau}_{\infty}) |^p, \]
\end{lemma}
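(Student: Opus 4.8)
The plan is to pass the two limits one after the other, inserting the discrete coarse--grained endpoint $f(X_T^{(i)})$ as an intermediate object; schematically,
\[ \mathbb{E}|M_T^{f}|^p \;\xrightarrow{\;\varepsilon\to 0\;}\; \mathbb{E}|f(W_T^{y,\tau})|^p \;\xrightarrow{\;T\to\infty\;}\; \mathbb{E}|f(W_\infty^{y,\tau})|^p . \]
For the inner limit, at $T$ fixed, I would first invoke Lemma~\ref{L: convergence discrete martingale transforms}, which gives $\|f(X_T^{(i)})-M_T^{(i),f}\|_p = c_T(\varepsilon)$. Under the standing smoothness assumptions $f$ is $L^p$ with bounded gradient, hence bounded on $\partial\Omega$, and its harmonic extension is bounded on $\Omega$ by the maximum principle; thus $\|f(X_T^{(i)})\|_p \leqslant \|f\|_\infty$ and, for $\varepsilon$ small, $\|M_T^{(i),f}\|_p \leqslant \|f\|_\infty+1$. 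Combining the elementary inequality $\big|\,|a|_X^p-|b|_X^p\,\big| \leqslant p\,(|a|_X+|b|_X)^{p-1}\,|a-b|_X$ with H{\"o}lder's inequality then upgrades $L^p$--closeness to closeness of $p$--th moments:
\[ \big|\,\mathbb{E}|M_T^{(i),f}|^p - \mathbb{E}|f(X_T^{(i)})|^p\,\big| \;\leqslant\; p\,\big(\|M_T^{(i),f}\|_p + \|f(X_T^{(i)})\|_p\big)^{p-1}\,\|f(X_T^{(i)})-M_T^{(i),f}\|_p \;=\; c_T(\varepsilon) . \]
It then remains to let $\varepsilon\to 0$ in $\mathbb{E}|f(X_T^{(i)})|^p$, which I would do with Lemma~\ref{L: weak convergence}, whose weak--consistency hypothesis is supplied by Lemma~\ref{L: weak consistency} (modulo the approximation discussed in the last paragraph, needed because $|\cdot|_X^p$ is not smooth in general): it yields $\mathbb{E}|f(X_T^{(i)})|^p \to \mathbb{E}|f(W_T^{y,\tau})|^p$. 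This settles the inner limit for every fixed $T$.

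For the outer limit I would use dominated convergence. Since $W_t^{y,\tau}=W_{t\wedge\tau}^y$ and $\tau<\infty$ almost surely, on the event $\{\tau\leqslant T\}$ one has $W_T^{y,\tau}=W_\infty^{y,\tau}$, whence, using $|f|\leqslant\|f\|_\infty$,
\[ \big|\,\mathbb{E}|f(W_T^{y,\tau})|^p - \mathbb{E}|f(W_\infty^{y,\tau})|^p\,\big| \;=\; \big|\,\mathbb{E}\big[\big(|f(W_T^{y,\tau})|^p-|f(W_\infty^{y,\tau})|^p\big)\,\tmmathbf{1}_{\{\tau>T\}}\big]\,\big| \;\leqslant\; 2\|f\|_\infty^p\,\mathbb{P}(\tau>T) , \]
and the last quantity tends to $0$ as $T\to\infty$. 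The target is finite because $\mathbb{E}|f(W_\infty^{y,\tau})|^p = \int_{\mathbb{R}^d}|f|_X^p\,\mathd\mu_y$, where the harmonic measure $\mu_y$ of $\Omega$ from $(y,0,\ldots,0)$ has bounded density (the Poisson kernel). Together with the inner limit this proves the lemma.

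The step I expect to be the main obstacle is the convergence $\mathbb{E}|f(X_T^{(i)})|^p \to \mathbb{E}|f(W_T^{y,\tau})|^p$: Lemma~\ref{L: weak convergence} is stated for a \emph{smooth} test function $\psi$ on $\Omega$, whereas $x\mapsto|x|_X^p$ need not be differentiable for a general Banach space $X$. When $X$ is Hilbert, or $p$ is an even integer, $\psi=|f|_X^p$ is smooth and Lemma~\ref{L: weak convergence} applies verbatim. In general I would approximate $|\cdot|_X^p$ uniformly on bounded sets by smooth functions of at most polynomial growth, apply Lemma~\ref{L: weak convergence} to the corresponding composition, and control the tails with the moment bounds of Lemmas~\ref{L: continuous moments} and~\ref{L: discrete moments}, which yield tightness of $\{f(X_T^{(i)})\}_\varepsilon$ and of $f(W_T^{y,\tau})$. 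A secondary point, already handled above, is that the mean value inequality needs a uniform-in-$\varepsilon$ $L^p$ bound on $M_T^{(i),f}$; if one prefers not to assume $f$ bounded, this bound — and the dominated convergence in the outer limit — can instead be obtained from Burkholder's inequality together with the submartingale property of $t\mapsto|f(W_t^{y,\tau})|^p$.
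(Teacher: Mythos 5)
Your proposal is correct and follows exactly the route the paper itself sketches: compare $M_T^{(i),f}$ to $f(X_T^{(i)})$ via Lemma~\ref{L: convergence discrete martingale transforms}, then $f(X_T^{(i)})$ to $f(W_T^{y,\tau})$ via Lemma~\ref{L: weak convergence}, and finally let $T\to\infty$ using $\tau<\infty$ almost surely. You in fact supply more detail than the paper's one-line proof, and the smoothness issue you flag for $\psi=|\cdot|_X^p$ is a genuine technical point that the paper silently elides.
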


\begin{proof}
  The proof is the same as the onedimensional case {\cite{DomPet2023a}}. It is
  merely a combination of \ Lemmas \ref{L: weak convergence} \& \ref{L:
  convergence discrete martingale transforms}, where we compare successively
  $f (W^{y, \tau}_T)$ to $f (X^{(i)}_T)$, and $f (X^{(i)}_T)$to \ $M^{(i),
  f}_T$.
\end{proof}

\begin{lemma}[Convergence in the weak formulations]
  \label{L: convergence weak formulation}Let $y > 0$ fixed. Let $i \in [1, d]$
  fixed. Let $f \in L^p (\partial \Omega, X)$ and $g \in L^p (\partial \Omega,
  X^{\ast})$ both harmonic. We have
  \[ \lim_{T \rightarrow \infty} \lim_{\varepsilon \rightarrow 0} \mathbb{E}
     \langle M_T^{(i), f}, M_T^{(i), g} \rangle =\mathbb{E} \langle
     \mathcal{M}^{y, f}_{\infty}, \mathcal{M}^{y, g}_{\infty} \rangle . \]
  \[ \lim_{T \rightarrow \infty} \lim_{\varepsilon \rightarrow 0} \mathbb{E}
     \langle M_T^{(i), i}, M_T^{(i), g} \rangle =\mathbb{E} \langle
     \mathcal{M}^{y, i}_{\infty}, \mathcal{M}^{y, g}_{\infty} \rangle . \]
\end{lemma}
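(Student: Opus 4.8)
\textbf{The first identity} is the bilinear counterpart of Lemma~\ref{L: convergence}, and I would prove it by the same scheme. First replace the two discrete martingale transforms by point evaluations: Lemma~\ref{L: convergence discrete martingale transforms}, applied to $f$ with exponent $p$ and to $g$ with exponent $q$, gives $\| f(X_T^{(i)}) - M_T^{(i),f}\|_p = c_T(\varepsilon)$ and $\| g(X_T^{(i)}) - M_T^{(i),g}\|_q = c_T(\varepsilon)$, while $\| f(X_T^{(i)})\|_p$, $\| M_T^{(i),f}\|_p$, $\| g(X_T^{(i)})\|_q$, $\| M_T^{(i),g}\|_q$ stay bounded uniformly in $\varepsilon$ for fixed $T$ (harmonic majorant / submartingale estimate). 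H\"older's inequality then yields $\mathbb{E}| \langle f(X_T^{(i)}), g(X_T^{(i)})\rangle - \langle M_T^{(i),f}, M_T^{(i),g}\rangle| = c_T(\varepsilon)$. Since $\psi(x) \assign \langle f(x), g(x)\rangle$ is a smooth scalar function on $\Omega$, Lemma~\ref{L: weak convergence} gives $\mathbb{E}\langle f(X_T^{(i)}), g(X_T^{(i)})\rangle = \mathbb{E}\langle f(W_T^{y,\tau}), g(W_T^{y,\tau})\rangle + c_{\psi,T}(\varepsilon)$. Letting $\varepsilon \to 0$ and then $T \to \infty$, using $W_T^{y,\tau}\to W_\infty^{y,\tau}$ almost surely and uniform integrability of $\langle f(W_T^{y,\tau}), g(W_T^{y,\tau})\rangle$ (which by Young's inequality is dominated by the uniformly integrable submartingales attached to the harmonic extensions of $| f|_X\in L^p$ and $| g|_{X^\ast}\in L^q$), one concludes, as $\mathcal{M}^{y,f}_\infty = f(W_\infty^{y,\tau})$ and $\mathcal{M}^{y,g}_\infty = g(W_\infty^{y,\tau})$.

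\textbf{For the second identity} the transform $M_T^{(i),i}$ is no longer a point evaluation, so I would go instead through the discrete cross-variation. Since $M^{(i),i}_0 = 0$ and both sequences are martingales, $\mathbb{E}\langle M_T^{(i),i}, M_T^{(i),g}\rangle = \sum_{k=1}^{N^5} \mathbb{E}\langle A_i\nabla f(B^{(i)}_{k-1})\cdot\mathd B^{(i)}_k,\ \nabla g(B^{(i)}_{k-1})\cdot\mathd B^{(i)}_k\rangle$. Conditioning each summand on the $\sigma$-algebra with respect to which $B^{(i)}_{k-1}$ is measurable, products of distinct coordinate tosses of $\mathd B^{(i)}_k$ have vanishing conditional expectation --- their Haar supports are disjoint or nested, and $\varepsilon^+_m\varepsilon^-_m = 0$ --- so only the coordinates $j=0$ and $j=i$ of $A_i\nabla f$ survive; the conditional second moments of Lemma~\ref{L: discrete moments} (organised into coarse blocks of $N$ steps, so that the layer-boundary terms are swept into $c_T(\varepsilon)$) then give $\mathbb{E}\langle M_T^{(i),i}, M_T^{(i),g}\rangle = (1+c_T(\varepsilon))\,\delta \sum_{k=1}^{N^5}\mathbb{E}\bigl[(A_i\nabla f)(B^{(i)}_{k-1})\cdot\nabla g(B^{(i)}_{k-1})\bigr] + c_T(\varepsilon)$, the dot denoting the sum over the $d+1$ coordinates combined with the $X$--$X^\ast$ pairing.

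\textbf{Passing to the limit,} one groups this Riemann sum into the $N^4$ coarse blocks, freezes $\nabla f$, $\nabla g$ at the coarse times $X^{(i)}_{n-1}$ (error $c_T(\varepsilon)$ via $\| D^2 f\|_\infty, \| D^2 g\|_\infty$), and applies Lemma~\ref{L: weak convergence} to the smooth function $(A_i\nabla f)\cdot\nabla g$ uniformly over $t_n\leqslant T$; as $\varepsilon\to0$ the right-hand side tends to $\mathbb{E}\int_0^T (A_i\nabla f)(W_t^{y,\tau})\cdot\nabla g(W_t^{y,\tau})\,\mathd t$, which by Ito's isometry (cross-variation of the stochastic integrals defining $\mathcal{M}^{y,i}_t$ and $\mathcal{M}^{y,g}_t$, together with $\mathbb{E}\mathcal{M}^{y,i}_T = 0$) is exactly $\mathbb{E}\langle\mathcal{M}^{y,i}_T, \mathcal{M}^{y,g}_T\rangle$. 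Finally $T\to\infty$: $\mathcal{M}^{y,i}$ is $L^p$-bounded (Burkholder, as a transform of the $L^p$-bounded $\mathcal{M}^{y,f}$) and $\mathcal{M}^{y,g}$ is $L^q$-bounded, so both close up at infinity, and $\mathbb{E}\int_T^\infty| (A_i\nabla f)\cdot\nabla g|\,\mathd t\to0$ because $\mathbb{E}\int_0^\infty| (A_i\nabla f)(W_t^{y,\tau})\cdot\nabla g(W_t^{y,\tau})|\,\mathd t \lesssim \| f\|_{L^p}\| g\|_{L^q}$ by Cauchy--Schwarz in $t$, H\"older in $\omega$, and Littlewood--Paley square-function bounds; hence $\mathbb{E}\langle\mathcal{M}^{y,i}_T, \mathcal{M}^{y,g}_T\rangle\to\mathbb{E}\langle\mathcal{M}^{y,i}_\infty, \mathcal{M}^{y,g}_\infty\rangle$, which is the second identity.

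\textbf{The main obstacle} is the second identity. Since $M_T^{(i),i}$ is only a martingale transform one is forced into the cross-variation computation, where the three error scales $c(\varepsilon)$, $c_T(\varepsilon)$, $c(T)$ must be tracked simultaneously --- in particular the coarse-block freezing of $\nabla f,\nabla g$ and the layer-boundary discrepancies in the conditional second moments of $\mathd B^{(i)}_k$. More seriously, the $T\to\infty$ step is where the unboundedness of $\Omega$ enters: one needs a priori integrability of the continuous cross-variation $\int_0^\infty(A_i\nabla f)\cdot\nabla g\,\mathd t$ and the fact that $\mathcal{M}^{y,i}, \mathcal{M}^{y,g}$ converge at infinity, i.e. that the continuous weak formulation \eqref{eq:weak formulation Ri} is a genuinely convergent object --- precisely the new difficulties (ii) and (iii) flagged in the introduction.
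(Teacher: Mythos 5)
Your proposal is correct and follows essentially the same route as the paper: the first identity via Lemma~\ref{L: convergence discrete martingale transforms} followed by Lemma~\ref{L: weak convergence} applied to $\psi(x) \assign \langle f(x), g(x)\rangle$, and the second by expanding both the discrete and continuous cross-variations into Riemann sums and passing to the limit termwise through weak convergence. The only organisational difference is that the paper introduces a third, \emph{fixed} subdivision $(T_k)_{k \leqslant K}$ of $[0,T]$, lets $\varepsilon \rightarrow 0$ at those $K$ frozen times, and only then sends $K \rightarrow \infty$, which avoids having to make the weak-convergence error uniform over all $N^4$ coarse times $t_n$ as your version implicitly requires.
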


\begin{proof}
  For the first limit we can use the same strategy as for the proof of Lemma
  \ref{L: convergence}. This is because the first limit involves the
  discretisation of the harmonic function $f$. However the second limit
  involves the martingale transform $M_T^{(i), i}$ of $M_T^{(i), f}$, or in
  the limit the martingale transform $\mathcal{M}^{y, i}_{\infty}$ of
  $\mathcal{M}^{y, f}_{\infty}$. Those are not martingales directly associated
  to $\mathcal{R}_i f$ or its discretisation, since a projection
  \eqref{eq:conditonal expectation Ri} is needed to recover $\mathcal{R}_i f$.
  
  \
  
  Indeed for the first limit, using Lemma \ref{L: convergence discrete
  martingale transforms} and recalling that all functions are smooth with
  compact support, we have successively
  \[ \mathbb{E} \langle M_T^{(i), f}, M_T^{(i), g} \rangle =\mathbb{E} \langle
     f (X_T^{(i), \tau_{\varepsilon}}), M_T^{(i), g} \rangle + c_T
     (\varepsilon), \]
  and similarly
  \[ \mathbb{E} \langle f (X_T^{(i), \tau_{\varepsilon}}), M_T^{(i), g}
     \rangle =\mathbb{E} \langle f (X_T^{(i), \tau_{\varepsilon}}), g
     (X_T^{(i), \tau_{\varepsilon}}) \rangle + c_T (\varepsilon) . \]
  Applying the weak convergence of Lemma \ref{L: weak convergence} to $\psi
  (x) \assign \langle f (x), g (x) \rangle$ then letting $T$ go to infinity
  yields the first limit.
  
  \
  
  For the second limit, explicit functions of $X^{(i)}$ are only accessible
  after expanding the product martingales. Indeed, given $T > 0$, we have
  \[ \mathbb{E} \langle \mathcal{M}^{y, i}_T, \mathcal{M}^{y, g}_T \rangle
     =\mathbb{E} \langle \mathcal{M}^{y, i}_0, \mathcal{M}^{y, g}_0 \rangle
     +\mathbb{E} \int_0^T \langle A_i \nabla f (W^{y, \tau}_t), \nabla g
     (W^{y, \tau}_t) \rangle \mathd t, \]
  and similarly
  \[ \mathbb{E} \langle M_T^{(i), i}, M_T^{(i), g} \rangle =\mathbb{E} \langle
     M_0^{(i), i}, M_0^{(i), g} \rangle +\mathbb{E} \sum_{n = 1}^{N^4} \langle
     A_i \nabla f (X_{t_n}), \nabla g (X_{t_n}) \rangle  (t_{n + 1} - t_n) .
  \]
  Notice that the first terms on the right hand sides are null. Choose now a
  large integer $K$. We know that we can approximate the continuous stochastic
  integral as
  \[ \mathbb{E} \int_0^T \langle A_i \nabla f (W^{y, \tau}_t), \nabla g (W^{y,
     \tau}_t) \rangle \mathd t =\mathbb{E} \sum_{k = 1}^K \langle A_i \nabla f
     (W^{y, \tau}_{T_{k - 1}}), \nabla g (W^{y, \tau}_{T_{k - 1}}) \rangle 
     (T_k - T_{k - 1}) + c (K), \]
  where $(T_k)_{k \in [0, K]}$ is a regular subdivision of $[0, T]$, i.e. $T_k
  = k T / K$, and $c (K)$ is a function that goes to zero when $K$ goes to
  infinity. Similarly, it is now an exercise to show that we can approximate
  \[ \mathbb{E} \sum_{n = 1}^{N^4} \langle A_i \nabla f (X_{t_n}), \nabla g
     (X_{t_n}) \rangle  (t_{n + 1} - t_n) =\mathbb{E} \sum_{k = 1}^K \langle
     A_i \nabla f (X_{T_{k - 1}}), \nabla g (X_{T_{k - 1}}) \rangle  (T_k -
     T_{k - 1}) + c (K), \]
  where we assume without loss of generality that $N$ is a multiple of $K$. We
  can now pass to the limit for each $k \in [1, K]$ so that $\mathbb{E}
  \langle A_i \nabla f (X_{T_{k - 1}}), \nabla g (X_{T_{k - 1}}) \rangle
  \rightarrow \mathbb{E} \langle A_i \nabla f (W^{y, \tau}_{T_{k - 1}}),
  \nabla g (W^{y, \tau}_{T_{k - 1}}) \rangle$ when $\varepsilon$ goes to zero,
  using again weak convergence. This allows to prove the second limit,
  therefore concludingg the proof of Lemma \ref{L: convergence weak
  formulation}.
\end{proof}

Finally, we prove our main result.

\begin{proof*}{Proof of Theorem \ref{T: upper bound single Riesz}}
  {\dueto{Upper bound for a single riesz transform}}Recall that we know from
  Lemma \ref{L: Lp estimate for Mii} that
  \[ \| M_k^{(i), i} \|_p \leqslant \| \mathcal{S}_i \|_{p \rightarrow p}  \|
     M^{(i), f}_k \|_p . \]
  We have equivalently
  \[ \mathbb{E} \langle M_k^{(i), i}, M^{(i), g} \rangle \leqslant \|
     \mathcal{S}_i \|_{p \rightarrow p}  \| M^{(i), f}_k \|_p \| M^{(i), g}_k
     \|_p \]
  for any test function $g \in L^q (\partial \Omega, X^{\ast})$. Using Lemmas
  \ref{L: convergence} \& \ref{L: convergence weak formulation}, we pass to
  the limit $T \rightarrow \infty$ and $\varepsilon \rightarrow 0$ and obtain,
  for any fixed $y > 0$,
  \[ \mathbb{E} \langle \mathcal{M}^{y, i}_{\infty}, \mathcal{M}^{y,
     g}_{\infty} \rangle \leqslant \| \mathcal{S}_i \|_{p \rightarrow p} 
     (\mathbb{E} | f (W^{y, \tau}_{\infty}) |^p)^{1 / p}  (\mathbb{E} | g
     (W^{y, \tau}_{\infty}) |^q)^{1 / q} \]
  Notice that
  \[ \mathbb{E} | f (W^{y, \tau}_{\infty}) |^p = \int_{\partial \Omega} | f
     (x) |^p \mathd \omega^y (x) \]
  where $\mathd \omega^y$ is the harmonic measure associated to the Brownian
  $W^{y, \tau}$ started at $y$ and stopped upon hitting $\partial \Omega$. Due
  to translation invariance of the domain, we have $\mathd \omega^y (x) =
  \mathd \omega^0 (x - y) = G (x - y) \mathd x$, where $G$ is the normalized
  Gaussian. It follows that
  \[ \int_y \mathbb{E} | f (W^{y, \tau}_{\infty}) |^p = \int_y \int_{\partial
     \Omega} | f (x) |^p \mathd \omega^y (x) = \left( \int_{\partial \Omega} |
     f (x) |^p \mathd x \right) \left( \int_y G (y) \mathd y \right) = \| f
     \|_p, \]
  hence
  \[ \int_y (\mathbb{E} | f (W^{y, \tau}_{\infty}) |^p)^{1 / p}  (\mathbb{E} |
     g (W^{y, \tau}_{\infty}) |^q)^{1 / q} \leqslant \| f \|_p  \| g \|_q . \]
  Following {\cite{GunVar1979a}}, we observe that
  \[ \lim_{y \rightarrow \infty} \int_y \mathbb{E} \langle \mathcal{M}^{y,
     i}_{\infty}, \mathcal{M}^{y, g}_{\infty} \rangle = \int_{\Omega} \langle
     A_i \nabla f, \nabla g \rangle 2 y \mathd y \mathd x = \langle
     \mathcal{R}_i f, g \rangle, \]
  yielding
  \[ \langle \mathcal{R}_i f, g \rangle \leqslant \| \mathcal{S}_i \|_{p
     \rightarrow p} \| f \|_p  \| g \|_q, \]
  hence the desired result.
\end{proof*}

\subsection{Upper bound for the vector Riesz transform}

Let $\mathcal{R} f \assign \left( \sum_{i = 1}^d | \mathcal{R}_i f |_X^2
\right)^{1 / 2}$ and $\mathcal{S} f \assign \left( \sum_{i = 1}^d |
\mathcal{S}_i f |_X^2 \right)^{1 / 2}$. Our goal is to prove Theorem \ref{T:
upper bound Riesz vector}, namely
\[ \| \mathcal{R} \|_p \leqslant \| \mathcal{S} \|_p . \]
Let $g = (g_i)_{i = 1, \ldots, d}$ a vector consisting of $d$ test functions.
The usual weak formulation for the estimate of the Riesz vector reads
\[ \begin{array}{rcl}
     \| \mathcal{R} f \|_p & = & \sup_{\| g \|_q \leqslant 1} \langle
     \mathcal{R} f, g \rangle = \sup_{\| g \|_q \leqslant 1} \sum_{i = 1}^d
     \langle \mathcal{R}_i f, g_i \rangle = \sup_{\| g \|_q \leqslant 1}
     \mathbb{E} \sum_{i = 1}^d \int_0^{\tau} \langle A_i \nabla f, \nabla g_i
     \rangle \mathd t
   \end{array} \]
The last term is the stochastic representation involving the $(d +
1)$--dimensional Brownian motion $W^{\tau}$, namely
\[ \langle \mathcal{R} f, g \rangle =\mathbb{E} \sum_{i = 1}^d \int_0^{\tau}
   \langle A_i \nabla f, \nabla g_i \rangle =\mathbb{E} \sum_{i = 1}^d
   \int_0^{\tau} A_i \nabla f (W_s^{\tau}) \cdot \nabla g_i (W_s^{\tau})
   \mathd t. \]
Notice that all summands involve the \tmtextit{same} realization of the
Brownian motion.

\

\paragraph{Adapted stochastic representation}In order for us to compare the
dyadic Riesz vector and the usual Riesz vector, we need to adapt the
stochastic representation in a simple (but effective!) way. Observing that
\[ \langle \mathcal{R} f, g \rangle =\mathbb{E} \sum_{i = 1}^d \int_0^{\tau}
   A_i \nabla f (W_s^{\tau}) \cdot \nabla g_i (W_s^{\tau}) \mathd t = \sum_{i
   = 1}^d \mathbb{E} \int_0^{\tau} A_i \nabla f (W_s^{\tau}) \cdot \nabla g_i
   (W_s^{\tau}) \mathd t, \]
one realizes that the expectation operator $\mathbb{E}$ is applied $d$ times
in the last expression, and that the Brownian motion is a dummy variable. We
can therefore go further and reformulate
\begin{equation}
  \begin{array}{lll}
    \langle \mathcal{R} f, g \rangle & = & \sum_{i = 1}^d \mathbb{E}^i
    \int_0^{\tau_i} A_i \nabla f (W_s^{(i), \tau_i}) \cdot \nabla g_i
    (W_s^{(i), \tau_i}) \mathd t,\\
    & = & \mathbb{E} \sum_{i = 1}^d \int_0^{\tau_i} A_i \nabla f (W_s^{(i),
    \tau_i}) \cdot \nabla g_i (W_s^{(i), \tau_i}) \mathd t,
  \end{array} \label{eq: adapted stochastic representation}
\end{equation}
where the $W^{(i), \tau_i}$'s are $d$ different realizations of Brownian
motions in $\mathbb{R}^{d + 1}$, $\mathbb{E}^i$ the corresponding expectations
and $\tau_i$ the corresponding stopping times. Those realizations do
\tmtextit{not} need to be independent. This is reminiscent of the $d$ discrete
random walks $B^{(i)}$, each stopped at different times but sharing the same
horizontal component.

\begin{proof*}{Proof of Theorem \ref{T: upper bound Riesz vector}}
  {\dueto{Upper bound for the Riesz vector}}For all $i \in [1, d]$, we
  approximate $\mathcal{M}_t^f$ as well as the test function
  $\mathcal{M}_t^{g_i}$, by
  \[ M^{(i), f}_k \assign \mathcal{M}_0^f + \sum_{\ell = 1}^k \nabla f (B_{k -
     1}^{s (i)}) \cdot \mathd B_k^{(i)}, \quad M^{(i), g_i}_k \assign
     \mathcal{M}_0^{g_i} + \sum_{\ell = 1}^k \nabla g_i (B_{k - 1}^{(i)})
     \cdot \mathd B_k^{(i)} . \]
  We have by construction, for any fixed $y$, any fixed numerical parameters,
  that
  \[ \sum_{i = 1}^d \mathbb{E}^i M^{(i), \mathcal{S}_i f}_T M^{(i), g_i}_T =
     \sum_{i = 1}^d \langle \mathcal{S}_i f, g_i \rangle \leqslant \|
     \mathcal{S} \|_{p \rightarrow p} \| f \|_p \| g \|_q . \]
  An immediate consequence of Lemma \ref{L: convergence weak formulation}
  reads
  \[ \lim_{T \rightarrow \infty} \lim_{\varepsilon \rightarrow 0} \sum_{i =
     1}^d \mathbb{E}^i M^{(i), f}_T M^{(i), g_i}_T = \sum_{i = 1}^d
     \mathbb{E}^i  \mathcal{M}^{(i), f}_{\infty} M^{(i), g_i}_{\infty} \]
  \[ \lim_{T \rightarrow \infty} \lim_{\varepsilon \rightarrow 0} \sum_{i =
     1}^d \mathbb{E}^i M^{(i), \mathcal{S}_i f}_T M^{(i), g_i}_T = \sum_{i =
     1}^d \mathbb{E}^i  \mathcal{M}^{(i), \mathcal{R}_i f}_{\infty} M^{(i),
     g_i}_{\infty} \]
  Passing to the limit as for the proof of Theorem \ref{T: upper bound single
  Riesz} yields
  \[ \langle \mathcal{R} f, g \rangle \leqslant \| \mathcal{S} \|_{p
     \rightarrow p} \| f \|_p \| g \|_q, \]
  hence the desired result $\| \mathcal{R} \|_p \leqslant \| \mathcal{S}
  \|_p$.
\end{proof*}


\begin{thebibliography}{1}
  \bibitem[1]{BanWan1996}Rodrigo Ba{\~n}uelos  and  Gang Wang.
  {\newblock}Orthogonal martingales under differential subordination and
  applications to Riesz transforms. {\newblock}\tmtextit{Illinois J. Math.},
  40(4):678--691, 1996.{\newblock}
  
  \bibitem[2]{DomPet2022a}Komla Domelevo  and  Stefanie Petermichl.
  {\newblock}The dyadic and the continuous Hilbert transforms with values in
  Banach spaces. {\newblock}\tmtextit{ArXiv preprint}, 12 2022.
  {\newblock}\url{https://doi.org/10.48550/arXiv.2212.00090}.{\newblock}
  
  \bibitem[3]{DomPet2023a}Komla Domelevo  and  Stefanie Petermichl.
  {\newblock}The dyadic and the continuous Hilbert transforms with values in
  Banach spaces. part2. {\newblock}03 2023.{\newblock}
  
  \bibitem[4]{GunVar1979a}Richard~F.~Gundy  and  Nicolas~Th.~Varopoulos.
  {\newblock}Les transformations de Riesz et les int{\'e}grales stochastiques.
  {\newblock}\tmtextit{C. R. Acad. Sci. Paris S{\'e}r. A-B}, 289(1):0,
  1979.{\newblock}
  
  \bibitem[5]{KloPla1992}Peter~E.~Kloeden  and  Eckhard Platen.
  {\newblock}\tmtextit{Numerical solution of stochastic differential
  equations},  volume~23  of \tmtextit{Applications of Mathematics (New
  York)}. {\newblock}Springer-Verlag, Berlin, 1992.{\newblock}
  
  \bibitem[6]{Tal1986n}Denis Talay. {\newblock}Discr{\'e}tisation d'une
  {\'e}quation diff{\'e}rentielle stochastique et calcul approch{\'e}
  d'esp{\'e}rances de fonctionnelles de la solution.
  {\newblock}\tmtextit{RAIRO Mod{\'e}l. Math. Anal. Num{\'e}r.},
  20(1):141--179, 1986.{\newblock}
\end{thebibliography}
\end{document}